\newtheorem{theorem}{Theorem}[section]
\newtheorem{lemma}[theorem]{Lemma}
\theoremstyle{definition}
\newtheorem{definition}[theorem]{Definition}
\journal{Nuclear Physics B}
\begin{document}

\begin{frontmatter}



\title{Dynamics of an Intra-host Diffusive Pathogen Infection Model}


\author{Shohel Ahmed} 

\affiliation{organization={Department of Mathematical and Statistical Sciences},
            addressline={University of Alberta}, 
            city={Edmonton},
            postcode={T6L 3P8}, 
            state={Alberta},
            country={Canada}}

\begin{abstract}
In this paper, we first propose a diffusive pathogen infection model with a general incidence rate that incorporates cell-to-cell transmission. Using the theory of monotone dynamical systems, we prove that the model exhibits global threshold dynamics characterized by the basic reproduction number ($\mathcal{R}_{0}$), which is defined as the spectral radius of the next generation operator. We then derive a discrete counterpart of the continuous model by applying a nonstandard finite difference scheme. The results show that the discrete model preserves the positivity and boundedness of solutions, which guarantees the well-posedness of the problem, and that this scheme also preserves all equilibria of the original continuous model. By constructing suitable Lyapunov functionals for both models, we further show that the global threshold dynamics is completely determined by the basic reproduction number. In addition, through sensitivity analysis we identify the most influential parameters that effectively alter the disease dynamics. Finally, we illustrate the theoretical results with an example and numerical simulations, which extend and generalize some known results.
\end{abstract}



\begin{keyword}
General nonlinear incidence \sep Cell-to-cell transmission \sep Stability Analysis \sep Lyapunov function \sep Nonstandard finite difference  


\end{keyword}

\end{frontmatter}



\section{Introduction}
Over the past few decades, considerable effort has been devoted to the mathematical modelling of within-host pathogen infection. Such models have been used to describe the dynamics of various infectious diseases inside a host, including HIV, HCV, HBV and HTLV, among others. The classical within-host virus model consists of a system of three ordinary differential equations \cite{r1, r2}, where it is assumed that cells and virions are well mixed, so the spatial movement of free virus is ignored. To investigate the effects of spatial structure on virus dynamics, Wang and Wang \cite{r3} proposed the following diffusive system, in which the motion of the virus is assumed to follow the Fickian diffusion \cite{r4}:
\begin{equation}\label{ee1}
\begin{cases}
\begin{aligned}
    \dfrac{\partial S(x,t)}{\partial t} &= \Lambda - \beta_1 S V - d_S S,\\[4pt]
    \dfrac{\partial I(x,t)}{\partial t} &= \beta_1 S V - d_I I, \\[4pt]
    \dfrac{\partial V(x,t)}{\partial t} &= D_3 \Delta V + \alpha I - d_V V,
\end{aligned}
\end{cases}
\end{equation}
where $S(x,t)$, $I(x,t)$ and $V(x,t)$ denote, respectively, the densities of susceptible (uninfected) cells, infected cells, and free virus at position $x$ and time $t$. Susceptible cells are produced at a constant rate $\Lambda$ and become infected by free virions at rate $\beta_1 S V$. The parameters $d_S$, $d_I$ and $d_V$ denote the death rates of uninfected cells, infected cells and free virus, respectively. Infected cells produce free virions at rate $\alpha I$. The constant $D_3$ is the diffusion coefficient of the virus and $\Delta$ denotes the Laplacian operator.

Notice that the above system \eqref{ee1} only focuses on virus-to-cell spread in the bloodstream, although several studies have shown that cell-to-cell transmission, that is, direct contact between an infected source cell and a susceptible target cell, plays a crucial role in viral spread in vivo \cite{r5, r6, r7, r8}. A better understanding of viral cell-to-cell spread can therefore improve our ability to intervene in efficient viral transmission. For more detailed work on target cell dynamics and cell-to-cell transmission, one may consult \cite{r9, r10, r11, r12, r13, r14, r15, r16, r17} and the references therein. On the other hand, the bilinear incidence used in \eqref{ee1} is a simple description of infection. As pointed out in \cite{r14, r18}, a general incidence function can provide a unified framework by omitting inessential details. Motivated by these considerations, we propose the following pathogen infection model on the domain $Q = \mathbb{R}^{+} \times \Omega$,
\begin{equation}\label{e1}
\begin{cases}
\begin{aligned}
    \dfrac{\partial S(x,t)}{\partial t} &= D_1 \Delta S + \Lambda - S f(V) - S g(I) - d_S S, && x \in \Omega,\ t>0,\\[4pt]
    \dfrac{\partial I(x,t)}{\partial t} &= D_2 \Delta I + S f(V) + S g(I) - (\gamma + d_I) I, && x \in \Omega,\ t>0,\\[4pt]
    \dfrac{\partial V(x,t)}{\partial t} &= D_3 \Delta V + \alpha I - d_V V, && x \in \Omega,\ t>0,
\end{aligned}
\end{cases}
\end{equation}
where $D_1$ and $D_2$ are the diffusion coefficients of susceptible and infected cells, respectively \cite{r19}, and $\gamma$ is the lysis rate of infected cells \cite{r20}. The infection terms are assumed to be nonlinear responses to the densities of free virus and infected cells, and take the forms $S f(V)$ and $S g(I)$, where $f(V)$ and $g(I)$ denote the forces of infection by virus particles and infected cells. We assume that $f$ and $g$ satisfy the following conditions \cite{r21}:
\begin{enumerate}[start=1,label={(\bfseries A\arabic*)}]
\item $f(0)=g(0)=0$ and $f(V),\, g(I) > 0$ for $V,\, I > 0$;
\item $f'(V),\, g'(I) < 0$ and $f''(V),\, g''(I) \le 0$ for $V,\, I \ge 0$.
\end{enumerate}
From (A1) and (A2), the Mean Value Theorem (MVT) implies that
\begin{equation}\label{ee11}
    f'(V)\,V \le f(V) \le f'(0)\,V, \qquad
    g'(I)\,I \le g(I) \le g'(0)\,I, \qquad \text{for } V,\, I \ge 0.
\end{equation}
Biologically, assumptions (A1) and (A2) mean that
(i) infection cannot occur in the absence of virus or infected cells,
(ii) the incidences $S f(V)$ and $S g(I)$ increase as the densities of virus and infected cells increase, and
(iii) the per capita infection rates decrease when the densities become high, due to inhibition or saturation effects, since \eqref{ee11} implies
\[
\bigg(\dfrac{f(V)}{V}\bigg)' \le 0 \quad \text{and} \quad \bigg(\dfrac{g(I)}{I}\bigg)' \le 0.
\]
Clearly, this class of incidence functions includes the usual bilinear and saturated incidences, for example
\[
f(V) = \beta_1 V \quad \text{or} \quad f(V) = \dfrac{\beta_1 V}{1+V}, 
\qquad
g(I) = \beta_2 I \quad \text{or} \quad g(I) = \dfrac{\beta_2 I}{1+I},
\]
where $\beta_1, \beta_2 > 0$ are infection rates.

We consider \eqref{e1} with the initial conditions
\begin{equation}\label{e2}
    S(x,0) = \varphi_1(x) \ge 0,\quad
    I(x,0) = \varphi_2(x) \ge 0,\quad
    V(x,0) = \varphi_3(x) \ge 0,\quad x \in \Omega,
\end{equation}
where $\varphi_1, \varphi_2, \varphi_3 \in C^{2}(\Omega) \cap C^{0}(\overline{\Omega})$, together with the homogeneous Neumann boundary conditions
\begin{equation}\label{e3}
    \dfrac{\partial S}{\partial \nu} = \dfrac{\partial I}{\partial \nu} = \dfrac{\partial V}{\partial \nu} = 0, \quad x \in \partial \Omega,\ t>0,
\end{equation}
where $\Omega \subset \mathbb{R}^{n}$ is a bounded domain with piecewise smooth boundary $\partial \Omega$ and $\nu$ denotes the unit outward normal vector to $\partial \Omega$.

In general, an exact solution of the continuous system such as \eqref{e1} is difficult, and in many cases impossible, to obtain. It is therefore natural to require that a numerical method preserve, at the discrete level, the main qualitative properties of the corresponding continuous system. However, selecting a discrete scheme that efficiently preserves the global dynamics of the original continuous model is still a challenging problem \cite{r22}. To address this, Mickens proposed a robust nonstandard finite difference (NSFD) methodology \citep{r23}, which has since been widely applied to many dynamical models \citep{r24, r25, r26, r27, r28, r29}. Motivated by \cite{r23}, we apply the NSFD approach to the continuous system \eqref{e1} and obtain the following discrete scheme:
\begin{equation}\label{a1}
\begin{cases}
\begin{aligned}
\dfrac{S_{n}^{k+1} - S_{n}^{k}}{\Delta t} &= D_1 \dfrac{S_{n+1}^{k+1} - 2S_{n}^{k+1} + S_{n-1}^{k+1}}{(\Delta x)^{2}} + \Lambda - S_{n}^{k+1} f(V_{n}^{k}) - S_{n}^{k+1} g(I_{n}^{k}) - d_S S_{n}^{k+1}, \\[4pt]
\dfrac{I_{n}^{k+1} - I_{n}^{k}}{\Delta t} &= D_2 \dfrac{I_{n+1}^{k+1} - 2I_{n}^{k+1} + I_{n-1}^{k+1}}{(\Delta x)^{2}} + S_{n}^{k+1} f(V_{n}^{k}) + S_{n}^{k+1} g(I_{n}^{k}) - (\gamma + d_I) I_{n}^{k+1}, \\[4pt]
\dfrac{V_{n}^{k+1} - V_{n}^{k}}{\Delta t} &= D_3 \dfrac{V_{n+1}^{k+1} - 2V_{n}^{k+1} + V_{n-1}^{k+1}}{(\Delta x)^{2}} + \alpha I_{n}^{k+1} - d_V V_{n}^{k+1},
\end{aligned}
\end{cases}
\end{equation}
where the spatial domain is $x \in \Omega = [a,b]$, $a,b \in \mathbb{R}$, and $\Delta x = (b-a)/M$ is the spatial step size that divides the interval into $M$ equal subintervals. The time step is denoted by $\Delta t$. We denote by $S_{n}^{k}$, $I_{n}^{k}$ and $V_{n}^{k}$ the approximations of $S(x_{n}, t_{k})$, $I(x_{n}, t_{k})$ and $V(x_{n}, t_{k})$, respectively, at the mesh points $(x_{n}, t_{k})$, where
\[
x_{n} = a + n \Delta x,\quad n = 0,1,\dots,M, \qquad t_{k} = k \Delta t,\quad k \in \mathbb{N}.
\]
The discrete initial conditions are
\begin{equation}\label{a2}
S_{n}^{0} = \varphi_{1}(x_{n}) > 0,\quad I_{n}^{0} = \varphi_{2}(x_{n}) > 0,\quad V_{n}^{0} = \varphi_{3}(x_{n}) > 0,
\end{equation}
and the homogeneous Neumann boundary conditions are incorporated as
\begin{equation}\label{a3}
S_{-1}^{k} = S_{0}^{k},\quad S_{M}^{k} = S_{M+1}^{k},\quad
I_{-1}^{k} = I_{0}^{k},\quad I_{M}^{k} = I_{M+1}^{k},\quad
V_{-1}^{k} = V_{0}^{k},\quad V_{M}^{k} = V_{M+1}^{k},
\end{equation}
for all $k \ge 0$.

Our goal is to show that the NSFD scheme \eqref{a1} preserves the essential qualitative features of the continuous model \eqref{e1}, in particular the positivity, boundedness and global asymptotic stability of the equilibria. The rest of the paper is organized as follows. In Section~\ref{sec2}, we study the dynamical behavior of the continuous system \eqref{e1}, including the existence and uniqueness of positive solutions, the existence of equilibria, the basic reproduction number, and local and global stability results. In Section~\ref{sec3}, we analyze the global dynamics of the discrete system \eqref{a1}. In Section~\ref{sec4}, we present numerical simulations to confirm and illustrate the theoretical findings. A brief conclusion is given at the end of the paper.

\section{Dynamical behavior of the continuous model}\label{sec2}
\subsection{Existence, Uniqueness and Positivity}
To discuss the dynamical behavior of the continuous system (\ref{e1}), first we give the definition of upper and lower solution.
\begin{definition}
Let $(\hat{S}, \hat{I}, \hat{V})$ and $(\check{S}, \check{I}, \check{V})$ in $C(\bar{\Omega} \times [0, \infty)) \cap C^{1,2}(\Omega \times [0, \infty))$ are a pair of upper and lower solution to the problem (\ref{e1}), if $\check{S} \leq \hat{S},\check{I} \leq \hat{I},\check{V} \leq \hat{V}$ in $\bar{\Omega} \times [0, \infty)$ and the following differential inequalities hold:
\end{definition}
\begin{align*}
    \dfrac{\partial \hat{S}(x,t)}{\partial t}\geq&\ D_1\Delta \hat{S}  + \Lambda  - \hat{S} f(\check{V}) - \hat{S}g(\check{I}) - d_S \hat{S},\\
\dfrac{\partial \hat{I}(x,t)}{\partial t}\geq&\ D_2\Delta \hat{I} + \hat{S} f(\hat{V}) +  \hat{S}g(\hat{I})  - (\gamma+d_I) \hat{I},\\
  \dfrac{\partial \hat{V}(x,t)}{\partial t}\geq&\  D_3\Delta \hat{V} + \alpha \hat{I} -  d_V  \hat{V},\\
    \dfrac{\partial \check{S}(x,t)}{\partial t}\leq&\ D_1\Delta \check{S}  + \Lambda  - \check{S} f(\hat{V}) - \check{S}g(\hat{I}) - d_S \check{S},\\
\dfrac{\partial \check{I}(x,t)}{\partial t}\leq&\ D_2\Delta \check{I} + \check{S} f(\check{V}) +  \check{S}g(\check{I})  - (\gamma+d_I) \check{I},\\
  \dfrac{\partial \check{V}(x,t)}{\partial t}\leq&\  D_3\Delta \check{V} + \alpha \check{I} -  d_V  \check{V},
\end{align*}
for $(x,t) \in \Omega \times (0, \infty)$ and
\begin{align*}
&\dfrac{\partial \check{S}}{\partial\nu}\leq 0 \leq \dfrac{\partial \hat{S}}{\partial\nu},\quad
\dfrac{\partial \check{I}}{\partial\nu}\leq 0 \leq \dfrac{\partial \hat{I}}{\partial\nu},\quad
\dfrac{\partial \check{V}}{\partial\nu}\leq 0 \leq \dfrac{\partial \hat{V}}{\partial\nu} ,\quad &(x, t) \in \partial\Omega \times (0, \infty),\\
&\check{S}(x,t)\leq \varphi_1(x,t) \leq \hat{S}(x,t), \quad \check{I}(x,t)\leq \varphi_2(x,t) \leq \hat{I}(x,t),\\
&\check{V}(x,t)\leq \varphi_3(x,t) \leq \hat{V}(x,t),\quad &(x, t) \in \bar{\Omega} \times (0, \infty).
\end{align*}
It is easy to see that $\mathbf{0}=(0, 0, 0)$ and $\mathbf{K}=(K_1, K_2, K_3)$ are a pair of coupled lower-upper solutions to problem (\ref{e1}), where
\begin{align*}
&M_1=\text{max}\bigg\{ \dfrac{\Lambda}{d},  \| \varphi_1\|_{C(\bar{\Omega},\mathbb{R})}   \bigg\}, \quad M_2=\text{max}\bigg\{ \dfrac{\Lambda}{d},  \| \varphi_2\|_{C(\bar{\Omega},\mathbb{R})}   \bigg\}, \quad M_3=\text{max}\bigg\{ \dfrac{\alpha\Lambda}{d_V d},  \| \varphi_1\|_{C(\bar{\Omega},\mathbb{R})}   \bigg\},
\end{align*}
and $d=\text{min}\{d_S, d_I\}$. Using the following lemma provided by Redinger \cite{r30}, we get the existence and uniqueness of the solution.
\begin{lemma}
Let $\hat{U}$ and $\check{U}$ be a pair of coupled upper and lower solutions for problem (\ref{e1}) and suppose that the initial functions $\varphi_i, (i=1, 2, 3)$ are H\"{o}lder continuous in $\bar{\Omega}$. Then problem (\ref{e1}) has exactly one regular solution $U(x,t)= (S(x,t), I(x,t), V(x,t))$ satisfying $\check{U}\leq U \leq \hat{U}$ in $\bar{\Omega} \times [0, \infty)$.
\end{lemma}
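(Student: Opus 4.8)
The plan is to prove this via the \emph{method of monotone iterations} (coupled upper and lower solutions), which is exactly the framework underlying Redinger's result \cite{r30}; the substantive work is to verify that the reaction vector field of (\ref{e1}) fits the quasi-monotone hypotheses on the order interval between $\mathbf{0}$ and $\mathbf{K}$ furnished just above. Writing the reaction terms as $F_1 = \Lambda - Sf(V) - Sg(I) - d_S S$, $F_2 = Sf(V) + Sg(I) - (\gamma+d_I)I$ and $F_3 = \alpha I - d_V V$, I would first read off the off-diagonal dependence and record the mixed quasi-monotone structure: $F_1$ is nonincreasing in $I$ and $V$, $F_2$ is nondecreasing in $S$ and $V$, and $F_3$ is nondecreasing in $I$. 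This is precisely the coupling already encoded in the definition above, where the inequality for $\hat{S}$ is driven by the lower arguments $f(\check{V}), g(\check{I})$ while the inequality for $\hat{I}$ is driven by the upper arguments $f(\hat{V}), g(\hat{I})$, so I need not reconstruct the sign pattern from scratch.

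Next I would regularize the one indefinite dependence (the term $Sg(I) - (\gamma+d_I)I$ inside $F_2$) by a shift: choose constants $L_i$ dominating the Lipschitz bounds of $F_i$ on the bounded box $[\mathbf{0},\mathbf{K}]$ — finite because $f,g\in C^1$ and the box is compact — and replace $F_i$ by $\tilde F_i = F_i + L_i u_i$, which is monotone in the relevant arguments. Starting from $\overline{U}^{(0)}=\hat{U}$ and $\underline{U}^{(0)}=\check{U}$, I then define two sequences by solving, at each step, the decoupled linear parabolic problems in which the shifted reaction is evaluated at the previous iterate, together with the Neumann condition (\ref{e3}) and initial data (\ref{e2}).

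The core of the argument is to show, by repeated use of the scalar parabolic comparison principle, that the iterates remain correctly ordered and monotone, $\check{U}=\underline{U}^{(0)}\leq\underline{U}^{(1)}\leq\cdots\leq\overline{U}^{(1)}\leq\overline{U}^{(0)}=\hat{U}$, so that the increasing lower sequence and the decreasing upper sequence converge pointwise to limits $\underline{U}\leq\overline{U}$. Parabolic $L^p$ and Schauder estimates, combined with the H\"older continuity of the $\varphi_i$, upgrade this to convergence in a H\"older space and identify the limits as classical (regular) solutions lying in $[\check{U},\hat{U}]$. Finally, uniqueness — equivalently $\underline{U}=\overline{U}$ — follows from the local Lipschitz continuity of the reaction terms on $[\mathbf{0},\mathbf{K}]$ via a Gronwall estimate on the difference of two solutions, or again directly from the comparison principle.

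I expect the main obstacle to be the mixed (rather than purely cooperative) quasi-monotonicity: because $F_2$ has an indefinite dependence on $I$, the naive monotone iteration fails and the comparison step must be built around the shifted fields $\tilde F_i$. The essential quantitative input is the boundedness of the order interval $[\mathbf{0},\mathbf{K}]$, which makes the shift constants $L_i$ uniform and keeps every iterate trapped in the box where (A1)--(A2) guarantee the sign structure; without this a priori trapping the comparison principle could not be invoked at each stage.
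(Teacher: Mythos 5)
The paper does not actually prove this lemma: it is imported verbatim from Redlinger \cite{r30} as a black box, and the only work the paper does is to exhibit $\mathbf{0}$ and $\mathbf{K}$ as a pair of coupled lower--upper solutions so that the cited theorem applies. Your proposal is, in effect, a reconstruction of the standard proof behind that citation --- the monotone iteration scheme for mixed quasi-monotone parabolic systems (Pao/Redlinger): identify the quasi-monotone sign pattern, shift by a Lipschitz constant on the order interval $[\mathbf{0},\mathbf{K}]$ to make each scalar step monotone, squeeze the ordered iterates between $\check{U}$ and $\hat{U}$ via the scalar comparison principle, pass to the limit with parabolic Schauder/$L^p$ estimates, and close with a Gronwall uniqueness argument --- and that outline is sound. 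One small imprecision: the \emph{mixed} quasi-monotone character of the system comes from $F_1$ being nonincreasing in the off-diagonal variables $(I,V)$ while $F_2,F_3$ are nondecreasing in theirs; the indefinite dependence of $F_2$ on its own variable $I$ is a diagonal issue, which is exactly what the Lipschitz shift $\tilde F_i = F_i + L_i u_i$ is for in any case, so this does not affect the argument. (Note also that the monotonicity hypotheses you invoke require $f'>0$, $g'>0$, i.e.\ the intended reading of (A2), which the paper's inequality $f'(V)V\leq f(V)\leq f'(0)V$ implicitly assumes.)
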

Hence, $0\leq S(x,t)\leq M_1$, $0\leq I(x,t)\leq M_2$, $0\leq V(x,t)\leq M_3$ for $(x,t)\in \bar{\Omega} \times [0, \infty)$. And also, by the maximum principle, if  $\varphi_i(x,0)\neq 0, (i=1, 2, 3)$, we have $S(x,t)>0, I(x,t)>0, V(x,t)>0$ for all $t>0, x\in\bar{\Omega}$.

\subsection{Equilibria and Basic reproduction number}
It is easy to verify that system (\ref{e1}) always has a disease-free equilibrium $E_0(S_0, 0, 0)$ with $S_0=\dfrac{\Lambda}{d_S}$, and if exists the endemic equilibrium $E^{*}(S^{*}, I^{*}, V^{*})$ satisfies
\begin{equation}\label{eq1}
\begin{cases}
\begin{split}
    &\Lambda  - S f(V) - Sg(I) - d_S S=0,\\
    &S f(V) +  Sg(I)  - (\gamma+d_I) I=0,\\
    &\alpha I -  d_V  V=0.
\end{split}
\end{cases}
\end{equation}
In order to find the basic reproduction number ($\mathcal{R}_{0}$) for the system (\ref{e1}), we obtain the following linear system at $E_0$ for the infected classes:
\begin{equation}\label{eq2}
\begin{cases}
\begin{split}
&\dfrac{\partial I(x,t)}{\partial t}=\ D_2\Delta I + S_0 f'(0)V +  S_0 g'(0)I  - (\gamma+d_I) I, \quad & x \in \Omega, \ t>0,\\
 &\dfrac{\partial V(x,t)}{\partial t}= \  D_3\Delta V + \alpha I -  d_V  V, \quad &x \in \Omega, \ t>0,\\
&\dfrac{\partial I}{\partial\nu}=\dfrac{\partial V}{\partial\nu}=0, \quad &x \in \partial\Omega, \ t>0.
\end{split}
\end{cases}
\end{equation}
Substituting $I(x,t)=e^{\lambda t}\psi_2(x)$ and $V(x,t)=e^{\lambda t}\psi_3(x)$ into (\ref{eq2}), we obtain the following  cooperative eigenvalue problem:
\begin{equation}\label{eq3}
\begin{cases}
\begin{split}
&\lambda \psi_2(x) =\ D_2\Delta \psi_2(x) + S_0 f'(0) \psi_3(x) +  S_0 g'(0) \psi_2(x)  - (\gamma+d_I) \psi_2(x), \quad & x \in \Omega,\\
 &\lambda \psi_3(x)= \  D_3\Delta  \psi_3(x) + \alpha  \psi_2(x) -  d_V   \psi_3(x), \quad &x \in \Omega,\\
&\dfrac{\partial  \psi_2(x)}{\partial\nu}=\dfrac{\partial  \psi_3(x)}{\partial\nu}=0, \quad &x \in \partial\Omega.
\end{split}
\end{cases}
\end{equation}
By \cite{r31}(Theorem 7.6.1), we conclude that (\ref{eq3}) has a principal eigenvalue $\lambda(S_0, f'(0), g'(0))$ with a positive eigenfunction. Now we are in a position to apply the ideas and the theory in \cite{r32} to define $\mathcal{R}_{0}$ for the model (\ref{e1}). Let $\widetilde{T}:C(\bar{\Omega},\mathbb{R}^{2})\rightarrow C(\bar{\Omega},\mathbb{R}^{2})$ be the solution semigroup of the following reaction-diffusion system:
\begin{equation}\label{eq4}
\begin{cases}
\begin{split}
&\dfrac{\partial I(x,t)}{\partial t}=\ D_2\Delta I - (\gamma+d_I) I, \quad & x \in \Omega, \ t>0,\\
 &\dfrac{\partial V(x,t)}{\partial t}= \  D_3\Delta V + \alpha I -  d_V  V, \quad &x \in \Omega, \ t>0,\\
& I(x,0)=\psi_2(x), V(x,0)=\psi_3(x), \quad & x \in \Omega, \ t>0,\\
&\dfrac{\partial I}{\partial\nu}=\dfrac{\partial V}{\partial\nu}=0, \quad &x \in \partial\Omega.
\end{split}
\end{cases}
\end{equation}
Thus, with initial infection $\Psi(x)=(\psi_2(x), \psi_3(x))$, the distribution of those infection members becomes $\widetilde{T}(t)\Psi(x)$ as time evolves. As in \cite{r32}, the matrices $F$ and $V$ defined as
\begin{align*}
F(x)= \begin{pmatrix}
S_0g'(0) & S_0f'(0)\\
0 & 0
\end{pmatrix}
, \quad
V(x)= \begin{pmatrix}
\gamma+d_I & 0\\
-\alpha & d_V
\end{pmatrix}.
\end{align*}
Therefore, the distribution of total new infections is
\begin{align*}
\int_{0}^{\infty} F(x) \widetilde{T}(t)\Psi(x) dt,
\end{align*}
Then, we define
\begin{align*}
L(\Psi)(x):=\int_{0}^{\infty} F(x) \widetilde{T}(t)\Psi(x) dt= F(x) \int_{0}^{\infty} \widetilde{T}(t)\Psi(x) dt.
\end{align*}
It is clear that $L$ is a positive and continuous operator which maps the initial infection distribution $\Psi$ to the distribution of the total infective members produced during the infection period. Applying the idea of next generation operators \cite{r32}, we define the spectral radius of $L$ as the basic reproduction number
$$\mathcal{R}_{0}:=\rho(L).$$
By some calculations, we obtain that
\begin{align*}
\mathcal{R}_{0}=\dfrac{S_0\alpha f'(0)}{d_V(\gamma+d_I )}+\dfrac{S_0g'(0)}{(\gamma+d_I )}:=\mathcal{R}_{01}+\mathcal{R}_{02},
\end{align*}
where $\mathcal{R}_{01}$ and $\mathcal{R}_{02}$ are partial basic reproduction numbers induced by virus-to-cell transmission and cell-to-cell transmission, respectively. The following theorem now prove regarding the meaningful steady states.

\begin{theorem}
If $\mathcal{R}_{0}<1$, then the disease-free equilibrium $E_{0}(S_0,0,0)$ is the only equilibrium of the system $(\ref{e1})$; when $\mathcal{R}_{0}>1$, it also has a unique endemic equilibrium $E^{*}(S^{*}, I^{*}, V^{*})$ where
\begin{align*}
S^*=\dfrac{\Lambda - (\gamma+d_I)I^*}{d_S} \quad \text{and} \quad V^*=\dfrac{\alpha I^*}{d_V}.
\end{align*}
\end{theorem}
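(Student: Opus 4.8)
The plan is to reduce the equilibrium system \eqref{eq1} to a single scalar equation in the variable $I$ and then analyze that equation by a monotonicity-plus-intermediate-value argument. First I would use the third equation of \eqref{eq1} to write $V=\alpha I/d_V$, and add the first two equations to cancel the nonlinear terms, obtaining $\Lambda-d_S S-(\gamma+d_I)I=0$, hence $S=\bigl(\Lambda-(\gamma+d_I)I\bigr)/d_S$. These are exactly the stated formulas for $S^*$ and $V^*$. Since we require $S>0$ together with $I,V\ge0$, the biologically admissible range is $I\in[0,\bar I)$ with $\bar I:=\Lambda/(\gamma+d_I)$.

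Next I would substitute these expressions into the second equation of \eqref{eq1}, namely $S\bigl[f(V)+g(I)\bigr]=(\gamma+d_I)I$, and divide by $I>0$ to define
\[
\Phi(I):=\frac{\Lambda-(\gamma+d_I)I}{d_S}\cdot\frac{f(\alpha I/d_V)+g(I)}{I}-(\gamma+d_I),\qquad I\in(0,\bar I).
\]
A positive endemic equilibrium corresponds exactly to a zero of $\Phi$ in $(0,\bar I)$, whereas $I=0$ always yields $E_0$. I would then evaluate the two boundary limits. As $I\to0^+$, using $f(V)/V\to f'(0)$ and $g(I)/I\to g'(0)$, one gets
\[
\lim_{I\to0^+}\Phi(I)=S_0\Bigl(\frac{\alpha f'(0)}{d_V}+g'(0)\Bigr)-(\gamma+d_I)=(\gamma+d_I)(\mathcal{R}_0-1),
\]
while as $I\to\bar I^-$ the first factor vanishes and the quotient $\bigl(f(\alpha I/d_V)+g(I)\bigr)/I$ stays bounded by \eqref{ee11}, so $\Phi(\bar I^-)=-(\gamma+d_I)<0$.

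The decisive step, and the one I expect to lean on the structural hypotheses, is to show that $\Phi$ is strictly decreasing on $(0,\bar I)$. I would write $\Phi(I)+(\gamma+d_I)=A(I)B(I)$ with $A(I):=\bigl(\Lambda-(\gamma+d_I)I\bigr)/d_S$ and $B(I):=\bigl(f(\alpha I/d_V)+g(I)\bigr)/I$. The factor $A$ is positive and strictly decreasing on $(0,\bar I)$. For $B$, the key is that \eqref{ee11} is equivalent to $\bigl(f(V)/V\bigr)'\le0$ and $\bigl(g(I)/I\bigr)'\le0$; writing $f(\alpha I/d_V)/I=(\alpha/d_V)\,f(\alpha I/d_V)/(\alpha I/d_V)$ shows each summand of $B$ is non-increasing in $I$, so $B$ is positive and non-increasing. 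A product of a positive strictly decreasing function and a positive non-increasing function is strictly decreasing, hence $\Phi$ is strictly decreasing.

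Finally I would assemble the conclusion. If $\mathcal{R}_0<1$, then $\Phi(0^+)<0$ and strict monotonicity forces $\Phi<0$ throughout $(0,\bar I)$, so no positive root exists and $E_0$ is the only equilibrium. If $\mathcal{R}_0>1$, then $\Phi(0^+)>0>\Phi(\bar I^-)$, and continuity together with strict monotonicity gives exactly one zero $I^*\in(0,\bar I)$; the associated $S^*,V^*$ are automatically positive because $I^*<\bar I$, yielding the unique endemic equilibrium $E^*$. The main obstacle is the monotonicity claim, but the factorization reduces it to the elementary observation that \eqref{ee11} encodes the monotonicity of the ratios $f(V)/V$ and $g(I)/I$ guaranteed by (A1)--(A2).
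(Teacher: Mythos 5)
Your proof is correct, and it follows the paper's reduction but closes the uniqueness step by a genuinely different mechanism. The paper works with $\mathcal{G}(I)=\frac{\Lambda-(\gamma+d_I)I}{d_S}\bigl(f(\alpha I/d_V)+g(I)\bigr)-(\gamma+d_I)I$, i.e.\ your $I\,\Phi(I)$, establishes existence from $\mathcal{G}(0)=0$, $\mathcal{G}'(0)=(\gamma+d_I)(\mathcal{R}_0-1)>0$ and $\mathcal{G}(\Lambda/(\gamma+d_I))=-\Lambda<0$, and then proves uniqueness by showing $\mathcal{G}'(I^*)<0$ at any positive root using $Vf'(V)\le f(V)$ and $Ig'(I)\le g(I)$ --- a local sign-of-derivative-at-roots argument (whose completion, that two roots with negative derivative cannot coexist given $\mathcal{G}'(0)>0$, the paper leaves rather terse). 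You instead divide by $I$ and prove that $\Phi=AB-(\gamma+d_I)$ is \emph{globally} strictly decreasing on $(0,\bar I)$, using exactly the same structural fact in the equivalent form $\bigl(f(V)/V\bigr)'\le 0$, $\bigl(g(I)/I\bigr)'\le 0$. Your route buys three things at once: existence and uniqueness fall out of a single intermediate-value-plus-monotonicity argument; the nonexistence of positive equilibria when $\mathcal{R}_0<1$ (which the paper dismisses as ``easy'') is handled explicitly by the same monotonicity; and you avoid the slightly delicate bookkeeping of evaluating $\mathcal{G}'$ at an unknown root. The only point worth stating more carefully is the strict decrease of the product: you need $B>0$ and $A>0$ on the open interval for the chain $A(I_1)B(I_1)>A(I_2)B(I_1)\ge A(I_2)B(I_2)$, both of which hold by (A1) and $I<\bar I$, so the argument is sound.
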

\begin{proof}
It is easy to proof for the case $\mathcal{R}_{0}<1$. Consider $\mathcal{R}_{0}>1$, it follows from (\ref{eq1}) that
\begin{align*}
S=\dfrac{\Lambda - (\gamma+d_I)I}{d_S} \quad \text{and} \quad V=\dfrac{\alpha I}{d_V}.
\end{align*}
Let
\begin{align*}
\mathcal{G}(I)=\dfrac{\Lambda  - (\gamma+d_I)I}{d_S}\bigg(f\bigg(\dfrac{\alpha I}{d_V}\bigg)+g(I)\bigg) - (\gamma+d_I)I,
\end{align*}
with $\mathcal{G}(0)=0$, $\mathcal{G}\Big(\dfrac{\Lambda}{\gamma+d_I}\Big)=-\Lambda<0$ and
\begin{align*}
\mathcal{G}'(0)=\dfrac{\Lambda}{d_S}\bigg(\dfrac{\alpha}{d_V}f'(0)+g'(0)\bigg) - (\gamma+d_I)=(\gamma+d_I)(\mathcal{R}_{0}-1)>0.
\end{align*}
Hence, equation $\mathcal{G}(I)=0$ has at least one positive root $I^* \in \bigg(0, \dfrac{\Lambda}{\gamma+d_I}\bigg)$.  That implies the existence of positive equilibrium of the system (\ref{e1}).
In order to show that the positive equilibrium is unique, we use
\begin{align*}
    &S^* f(V^*) +  S^*g(I^*)=  (\gamma+d_I) I^*,\\
    &\alpha I^* =  d_V  V^*.
\end{align*}
Then
\begin{align*}
\mathcal{G}'(I^*)
=&-\dfrac{\gamma+d_I}{d_S}\bigg(f\bigg(\dfrac{\alpha I^*}{d_V}\bigg)+g(I^*)\bigg) +S^*\bigg(\dfrac{\alpha}{d_V}f'\bigg(\dfrac{\alpha I^*}{d_V}\bigg)+g'(I^*)\bigg) - (\gamma + d_I)\\
=& -\dfrac{\gamma+d_I}{d_S}\bigg(f\bigg(\dfrac{\alpha I^*}{d_V}\bigg)+g(I^*)\bigg) + \dfrac{S^*}{I^*}\bigg(\dfrac{\alpha I^*}{d_V}f'\bigg(\dfrac{\alpha I^*}{d_V}\bigg)-f\bigg(\dfrac{\alpha I^*}{d_V}\bigg)+I^*g'(I^*) - g(I^*)\bigg).
\end{align*}
According to equation (\ref{ee11}), we have
\begin{equation}\label{eq5}
Vf'(V)\leq f(V) \quad \text{and}
\quad
Ig'(I)\leq g(I), \quad \text{for}
\quad
V, \ I \geq 0,
\end{equation}
which implies that $\mathcal{G}'(I^*)<0$. If there exists the second positive equilibrium $E^{\diamond}(S^{\diamond}, I^{\diamond}, V^{\diamond})$, then one has $\mathcal{G}'(I^\diamond)<0$. But which contradict the conditions (\ref{eq5}). This completes the proof.
\end{proof}

\subsection{Local Stability}
Let $ 0=\mu_{0}<\mu_{i}<\mu_{i+1},i=1,2\cdots$ be the eigenvalues of $ -\Delta$ on $\Omega$ with homogeneous Neumann boundary condition, $E(\mu_{i})$ the space of eigenfunctions corresponding to $\mu_{i}$ and $\big\{\phi_{ij}:j=1,2,\cdots,\text {dim}\ E(\mu_{i})\big\}$ an orthogonal basis of $E(\mu_{i})$. Then $\mathbb{X}=[C^{1}(\overline{\Omega})]^{3}$ can be decomposed as
\begin{align*}
 \mathbb{X}=\bigoplus^{\infty}_{i=1}\mathbb{X}_{i},\quad \mathbb{X}_{i}=\bigoplus^{\text {dim}\ E(\mu_{i})}_{i=1}\mathbb{X}_{ij},
\end{align*}
where $\mathbb{X}_{ij}=\big\{\boldsymbol{c}\phi_{ij}:\boldsymbol{c}\in \mathbb{R}^{3}\big\}$. Then we can prove the local stability of equilibrium as in \cite{r33, r34}.
\begin{theorem}
If $\mathcal{R}_{0}<1$, then the disease-free equilibrium $E_{0}$ of system $(\ref{e1})$ is locally asymptotically stable.
\end{theorem}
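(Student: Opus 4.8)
The plan is to prove local asymptotic stability by linearizing system (\ref{e1}) about $E_0=(S_0,0,0)$ and locating the spectrum of the resulting linear operator through the eigenfunction decomposition $\mathbb{X}=\bigoplus_{i\geq 0}\mathbb{X}_i$ introduced above. Writing $S=S_0+s$, $I=i$, $V=v$ and using $f(0)=g(0)=0$ together with the first-order expansions $f(V)\approx f'(0)v$ and $g(I)\approx g'(0)i$, the linearization of the reaction part at $E_0$ is governed by the Jacobian
\begin{align*}
J=\begin{pmatrix}
-d_S & -S_0 g'(0) & -S_0 f'(0)\\
0 & S_0 g'(0)-(\gamma+d_I) & S_0 f'(0)\\
0 & \alpha & -d_V
\end{pmatrix}.
\end{align*}
Since each $\mathbb{X}_i$ is invariant under the linearized operator and $-\Delta\phi_{ij}=\mu_i\phi_{ij}$, the stability of $E_0$ is equivalent to requiring that, for every $i\geq 0$, all eigenvalues of $J_i:=J-\mu_i\,\mathrm{diag}(D_1,D_2,D_3)$ have negative real part.

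First I would exploit the block-triangular structure of $J_i$: the entries below the first diagonal entry vanish, so $-d_S-\mu_i D_1<0$ is always an eigenvalue, and the remaining spectrum is that of the $2\times 2$ block acting on the infected components $(i,v)$,
\begin{align*}
B_i=\begin{pmatrix}
S_0 g'(0)-(\gamma+d_I)-\mu_i D_2 & S_0 f'(0)\\
\alpha & -d_V-\mu_i D_3
\end{pmatrix}.
\end{align*}
By the Routh--Hurwitz criterion for a $2\times 2$ matrix, it then suffices to verify $\operatorname{tr}B_i<0$ and $\det B_i>0$ for all $i\geq 0$.

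Next I would discharge these two inequalities using the hypothesis $\mathcal{R}_0<1$. Since $\mathcal{R}_{02}=S_0 g'(0)/(\gamma+d_I)\leq\mathcal{R}_0<1$, we have $S_0 g'(0)-(\gamma+d_I)<0$, whence
\[
\operatorname{tr}B_i=S_0 g'(0)-(\gamma+d_I)-d_V-\mu_i(D_2+D_3)<0 .
\]
For the determinant, a direct computation at $\mu_i=0$ and the very definition of $\mathcal{R}_0$ give
\[
\det B_0=d_V(\gamma+d_I)-\big(\alpha S_0 f'(0)+d_V S_0 g'(0)\big)=d_V(\gamma+d_I)(1-\mathcal{R}_0)>0 .
\]
Writing $A:=S_0 g'(0)-(\gamma+d_I)<0$, one then finds for $\mu_i>0$ that $\det B_i=\det B_0+\mu_i\big(d_V D_2-A D_3\big)+\mu_i^2 D_2 D_3$, and since $A<0$ every added term is positive, so $\det B_i\geq\det B_0>0$ for all $i$.

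The main obstacle is not any single computation but ensuring that the conclusion is uniform across the infinitely many modes: local asymptotic stability of $E_0$ requires the supremum over $i\geq 0$ of the real parts of the eigenvalues of $J_i$ to be strictly negative, not merely negative for each fixed $i$. Here the diffusion terms are reassuring rather than troublesome, because $\mu_i\to\infty$ forces both $\operatorname{tr}B_i$ and the decoupled eigenvalue $-d_S-\mu_i D_1$ to $-\infty$ while $\det B_i\to+\infty$, so the eigenvalues of $J_i$ drift deeper into the left half-plane as $i$ grows. Thus the real parts are negative for the finitely many small modes and tend to $-\infty$ along the tail, so their supremum is attained at a finite mode and is bounded above by the value at $\mu_i=0$, which we have shown to be negative. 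This yields the local asymptotic stability of $E_0$ when $\mathcal{R}_0<1$, as claimed.
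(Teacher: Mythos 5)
Your proposal is correct and follows essentially the same route as the paper: linearize at $E_0$, decompose along the Neumann eigenfunctions, factor out the eigenvalue $-d_S-\mu_i D_1$, and apply the trace/determinant (Routh--Hurwitz) test to the $2\times 2$ infected block using $\mathcal{R}_{02}<\mathcal{R}_0<1$. Your closing remark on uniformity of the spectral bound over all modes $\mu_i$ is a small refinement the paper leaves implicit, but it does not change the argument.
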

\begin{proof}
The linearization of system (\ref{e1}) at $E_{0}$ can be expressed by
\begin{align*}
 \dfrac{\partial Z(x,t)}{\partial t}=\mathcal{D}\Delta Z(x,t)+\mathcal{A} Z(x,t),
\end{align*}
where $Z=(S, I, V), \mathcal{D}=\text{diag}(D_{1}, D_{2}, D_{3})$, and
\begin{align*}
 \mathcal{A}=
\begin{pmatrix}
-d_S 		& -S_{0}g'(0) 		&-S_{0}f'(0)\\
0 		& S_{0}g'(0)-(\gamma+d_I) 	&S_{0}f'(0)\\
0 		&\alpha			& -d_V
\end{pmatrix}.
\end{align*}
Therefore, the characteristic equation at $E_{0}$ is
\begin{equation}
(\lambda+ d_S + \mu_{i}D_1)[(\lambda- S_{0} g'(0) + \gamma+ d_I + \mu_{i} D_2) (\lambda + d_V + \mu_{i} D_3) -\alpha S_{0}f'(0)]=0.\label{e2}
\end{equation}
It is obvious that (\ref{e2}) has an eigenvalue $\lambda_{1}=-(d_S+\mu_{i}D_1)$. The other two eigenvalues $\lambda_{2}$ and $\lambda_{2}$ are roots of
\begin{align*}
 \lambda^{2}-[S_{0}g'(0)-(\gamma+d_I+\mu_{i}D_2)-(d_V+\mu_{i}D_3)]\lambda-(S_{0}g'(0)-(\gamma+d_I+\mu_{i}D_2))(d_V+\mu_{i}D_3)-\alpha S_{0}f'(0)=0.
\end{align*}
It is easy to see that
\begin{align*}
 \lambda_{2}+\lambda_{3}
=&\ S_{0}g'(0)-(\gamma+d_I+\mu_{i}D_2)-(d_V+\mu_{i}D_3)\\
=&\  S_{0}g'(0)- (\gamma+d_I+d_V )-\mu_{i} (D_2+ D_3)\\
<&\ S_{0}g'(0)- (\gamma+d_I )-\mu_{i} (D_2+ D_3)\\
=&\ (\gamma+d_I ) (\mathcal{R}_{02}-1) -\mu_{i} (D_2+ D_3),
\end{align*}
and
\begin{align*}
\lambda_{2}\lambda_{3}=
&\ d_V(\gamma+d_I)-d_VS_{0}g'(0)-\alpha S_{0}f'(0)+\mu_{i}D_2(d_V+\mu_{i}D_3)+\mu_{i}D_3(\gamma+d_I-S_{0}g'(0))\\
=&\ d_V(\gamma+d_I)(1-\mathcal{R}_{0})+\mu_{i}D_3 (\gamma+d_I)(1-\mathcal{R}_{02})+\mu_{i}D_2(d_V+\mu_{i}D_3).
\end{align*}
Since $\mathcal{R}_{02}<\mathcal{R}_{0}<1$, we have $\lambda_{2}+\lambda_{3}<1$ and $\lambda_{2}\lambda_{3}>0$. This gives that $\text{Re} (\lambda_{2})<0$ and  $\text{Re} (\lambda_{3})<0$. Thus, all eigenvalues of (\ref{e2}) have a negative real parts when $\mathcal{R}_{0}<1$. Hence, $E_{0}$ is locally asymptotically stable. This completes the proof.
\end{proof}

Now we turn our attention to the endemic equilibrium $E^{*}$.
\begin{theorem}
If $\mathcal{R}_{0}>1$, then the endemic equilibrium $E^{*}$ of system (\ref{e1}) is locally asymptotically stable.
\end{theorem}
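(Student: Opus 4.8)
The plan is to mirror the proof of the preceding theorem: linearize \eqref{e1} at $E^{*}$, decompose the phase space $\mathbb{X}=\bigoplus_i\mathbb{X}_i$ along the eigenspaces of $-\Delta$, and verify on each component $\mathbb{X}_i$ that every root of the resulting characteristic cubic has negative real part by the Routh--Hurwitz criterion. Writing $Z=(S,I,V)$ and setting $p:=f(V^{*})+g(I^{*})>0$, the linearization reads $\partial_t Z=\mathcal{D}\Delta Z+\mathcal{A}^{*}Z$ with
\begin{align*}
\mathcal{A}^{*}=\begin{pmatrix}
-p-d_S & -S^{*}g'(I^{*}) & -S^{*}f'(V^{*})\\
p & S^{*}g'(I^{*})-(\gamma+d_I) & S^{*}f'(V^{*})\\
0 & \alpha & -d_V
\end{pmatrix}.
\end{align*}
Restricting to $\mathbb{X}_i$, the eigenvalues solve $\det\bigl(\lambda I-(\mathcal{A}^{*}-\mu_i\mathcal{D})\bigr)=0$. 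To keep the bookkeeping manageable I would abbreviate the positive quantities $A:=p+d_S+\mu_iD_1$, $C:=d_V+\mu_iD_3$, $g_1:=S^{*}g'(I^{*})$, $f_1:=S^{*}f'(V^{*})$ and $B:=(\gamma+d_I)+\mu_iD_2-g_1$, so that the characteristic polynomial becomes the cubic $\lambda^{3}+a_2\lambda^{2}+a_1\lambda+a_0$ with $a_2=A+B+C$, $a_1=AB+AC+BC-\alpha f_1+g_1p$ and $a_0=ABC-\alpha f_1A+g_1pC+\alpha f_1p$.

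Before applying Routh--Hurwitz I would record the positivity facts coming from the monotonicity in (A2) together with \eqref{eq5} and the equilibrium identities $S^{*}p=(\gamma+d_I)I^{*}$, $\alpha I^{*}=d_VV^{*}$. The crucial one is $B>0$: since $I^{*}g'(I^{*})\le g(I^{*})$ one gets $g_1=S^{*}g'(I^{*})\le S^{*}g(I^{*})/I^{*}\le S^{*}p/I^{*}=\gamma+d_I$, strictly because $f(V^{*})>0$. By the same mechanism, using $V^{*}f'(V^{*})\le f(V^{*})$ and $\alpha=d_VV^{*}/I^{*}$, one obtains the key bound $\alpha f_1\le BC$, valid for every $\mu_i$ since $B$ and $C$ only grow with $\mu_i$. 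Positivity of $a_2$ is immediate, and then $a_0=A(BC-\alpha f_1)+p(g_1C+\alpha f_1)>0$ follows at once from this bound.

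The heart of the argument is the condition $a_2a_1>a_0$. Here I would invoke the algebraic identity $(A+B+C)(AB+BC+CA)-ABC=(A+B)(B+C)(C+A)$ to reduce $a_2a_1-a_0$ to the transparent form
\begin{align*}
a_2a_1-a_0=(A+B)(B+C)(C+A)+g_1p(A+B)-\alpha f_1(B+C+p).
\end{align*}
Substituting $\alpha f_1\le BC$, peeling off the telescoping piece $(A+B)(B+C)(C+A)-BC(B+C)=A(A+B+C)(B+C)$, discarding the nonnegative term $g_1p(A+B)$, and finally using $A\ge p$ and $A+B+C\ge B+C$ collapses the expression to at least $p\bigl[(B+C)^{2}-BC\bigr]=p\bigl(B^{2}+BC+C^{2}\bigr)>0$. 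Thus all three Routh--Hurwitz conditions hold for every $i\ge0$, whence every eigenvalue of $\mathcal{A}^{*}-\mu_i\mathcal{D}$ has negative real part and $E^{*}$ is locally asymptotically stable.

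I expect the main obstacle to be precisely the chain of estimates in the last paragraph: selecting the right telescoping identity and ordering the substitutions ($\alpha f_1\le BC$ first, then $A\ge p$) so that the indefinite cross-terms cancel rather than merely bounding each summand individually, a naive approach that fails when the cell-to-cell contribution $g_1$ is small. All the quantitative input is the pair of inequalities \eqref{eq5}, which is what makes the bounds $B>0$ and $\alpha f_1\le BC$ go through.
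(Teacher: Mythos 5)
Your proposal is correct and follows essentially the same route as the paper: linearization at $E^{*}$, decomposition along the eigenspaces of $-\Delta$, the inequalities $V^{*}f'(V^{*})\le f(V^{*})$ and $I^{*}g'(I^{*})\le g(I^{*})$ combined with the equilibrium identities, and the Routh--Hurwitz criterion for the resulting cubic. If anything, your explicit verification of $a_2a_1-a_0>0$ via the identity $(A+B+C)(AB+BC+CA)-ABC=(A+B)(B+C)(C+A)$ is more complete than the paper's argument, which only derives lower bounds for $Q_2$, $Q_1$, $Q_0$ and then asserts $Q_2Q_1-Q_0>0$ without carrying out that final computation.
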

\begin{proof}
Linearizing system (\ref{e1}) at $E^{*}$ gives
\begin{align*}
 \dfrac{\partial Z(x,t)}{\partial t}= \mathcal{D}\Delta Z(x,t)+\mathcal{B} Z(x,t),
\end{align*}
where
\begin{align*}
 \mathcal{B}= \begin{pmatrix}
-(f(V^{*})+g(I^{*})+d_S) 	& -S^{*}g'(I^{*}) 				 & -S^{*}f'(V^{*})\\
f(V^{*})+g(I^{*}) 		& S^{*}g'(I^{*})-(\gamma+d_I)	 		& S^{*}f'(V^{*})\\
0 					& \alpha 					& -d_V
\end{pmatrix}.
\end{align*}
Thus, the characteristic equation at $E^{*}$ is
\begin{equation}\label{s1}
Q(\lambda)=\lambda^{3}+Q_{2}\lambda^{2}+Q_{1}\lambda+Q_{0}=0,
\end{equation}
where
\begin{align*}
Q_{2}=& \ f(V^{*})+g(I^{*})+d_S+\mu_{i}D_1-S^{*}g'(I^{*})+\gamma+d_I+\mu_{i}D_2+d_V+\mu_{i}D_{3},\\
Q_{1}=& \ (f(V^{*})+g(I^{*})+d_S+\mu_{i}D_1)(\gamma+d_I+\mu_{i}D_2+d_V+\mu_{i}D_{3})-\alpha S^{*}f'(V^{*})\\
&+\ (\gamma+d_I+\mu_{i}D_2)(d_V+\mu_{i}D_{3})-(d_S+\mu_{i}D_1+d_V+\mu_{i}D_{3})S^{*}g'(I^{*}),\\
Q_{0}=& \ (f(V^{*})+g(I^{*})+d_S+\mu_{i}D_1)(\gamma+d_I+\mu_{i}D_2)(d_V+\mu_{i}D_{3})\\
&-\ (d_S+\mu_{i}D_1)[\alpha S^{*}f'(V^{*})+(d_V+\mu_{i}D_{3})S^{*}g'(I^{*})].
\end{align*}
From (\ref{eq1}) and (\ref{eq5}), we get
\begin{align*}
 \alpha S^{*}f'(V^{*})\leq &\ \alpha S^{*}\dfrac{f(V^{*})}{V^{*}}=(\gamma+d_I)d_V \dfrac{f(V^{*})}{f(V^{*})+g(I^{*})},\\
S^{*}g'(I^{*})\leq &\ S^{*}\dfrac{g(I^{*})}{I^{*}}=(\gamma+d_I) \dfrac{g(I^{*})}{f(V^{*})+g(I^{*})}.
\end{align*}
Hence,
\begin{align*}
 Q_{2}
\geq & \ f(V^{*})+g(I^{*})+d_S+\mu_{i}D_1-(\gamma+d_I) \dfrac{g(I^{*})}{f(V^{*})+g(I^{*})}+\gamma+d_I+\mu_{i}D_2+d_V+\mu_{i}D_{3}\\
=&\ f(V^{*})+g(I^{*})+d_S+\mu_{i}D_1+(\gamma+d_I) \dfrac{f(V^{*})}{f(V^{*})+g(I^{*})}+\mu_{i}D_2+d_V+\mu_{i}D_{3},\\
Q_{1}
\geq & \ (f(V^{*})+g(I^{*})+d_S+\mu_{i}D_1)(\gamma+d_I+\mu_{i}D_2+d_V+\mu_{i}D_{3})-(\gamma+d_I)d_V \dfrac{f(V^{*})}{f(V^{*})+g(I^{*})}\\
&+\ (\gamma+d_I+\mu_{i}D_2)(d_V+\mu_{i}D_{3})-(d_S+\mu_{i}D_1+d_V+\mu_{i}D_{3})(\gamma+d_I) \dfrac{g(I^{*})}{f(V^{*})+g(I^{*})}\\
=& \ \ (f(V^{*})+g(I^{*}))(\gamma+d_I+\mu_{i}D_2+d_V+\mu_{i}D_{3})+(d_S+\mu_{i}D_1)(\mu_{i}D_2+d_V+\mu_{i}D_{3})\\
&+\ \mu_{i}D_2(d_V+\mu_{i}D_{3})+(\gamma+d_I) (d_S+\mu_{i}D_1+\mu_{i}D_{3}) \dfrac{f(V^{*})}{f(V^{*})+g(I^{*})},
\end{align*}
and
\begin{align*}
 Q_{0}
\geq & \ (f(V^{*})+g(I^{*})+d_S+\mu_{i}D_1)(\gamma+d_I+\mu_{i}D_2)(d_V+\mu_{i}D_{3})\\
&-\ (d_S+\mu_{i}D_1)\bigg[(\gamma+d_I)d_V \dfrac{f(V^{*})}{f(V^{*})+g(I^{*})}+(d_V+\mu_{i}D_{3})(\gamma+d_I) \dfrac{g(I^{*})}{f(V^{*})+g(I^{*})}\bigg]\\
=& \ (f(V^{*})+g(I^{*}))(\gamma+d_I+\mu_{i}D_2)(d_V+\mu_{i}D_{3})\\
&+ \ (d_S+\mu_{i}D_1)\bigg[\mu_{i}D_2(d_V+\mu_{i}D_{3})+\mu_{i}D_{3}(\gamma+d_I)\dfrac{f(V^{*})}{f(V^{*})+g(I^{*})}\bigg]>0.
\end{align*}
Hence, $Q_{2}Q_{1}-Q_{0}>0$. Then, by using Routh-Hurwitz criterion we claim that all eigenvalues of (\ref{s1}) have negative real parts. Thus, the endemic equilibrium $E^{*}$ of system (\ref{e1}) is locally asymptotically stable when $\mathcal{R}_{0}>1$. This completes the proof.
\end{proof}

\subsection{Global Stability}
Now, we discuss the global stability of the equilibria for the system (\ref{e1}) by considering Lyapunov functional based on the Volterra function $\Phi(x)=x-1-\ln x$. Clearly, $ \Phi(x)\geq 0$ for all $x>0$ and the equality holds if and only if $x=1$. In presence of diffusion, the aim is to show that every solution of the system (\ref{e1}) with a positive initial value that is different from the equilibrium point will converge to the equilibrium.
\begin{theorem}
 If $\mathcal{R}_{0}\leq 1$, then the disease-free equilibrium $E_{0}$ of system (\ref{e1}) is globally asymptotically stable.
\end{theorem}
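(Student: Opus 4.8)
The plan is to construct a Lyapunov functional built from the Volterra function $\Phi$ on the susceptible compartment and linear terms on the two infected compartments, with the weights chosen so that the free-virus contribution is annihilated. Concretely, I would take
\[
W(t)=\int_{\Omega}\left[\,S_0\,\Phi\!\left(\frac{S}{S_0}\right)+I+\frac{S_0 f'(0)}{d_V}\,V\,\right]dx,
\]
which is nonnegative and vanishes precisely at $E_0=(S_0,0,0)$. The coefficient $S_0 f'(0)/d_V$ in front of $V$ is forced by the requirement that, after differentiation, the $V$-terms in the reaction part cancel against the term $\alpha I-d_V V$ coming from the third equation of (\ref{e1}).

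Differentiating along solutions, with $\Phi'(S/S_0)=1-S_0/S$, I would first dispose of the diffusion. Since $\int_\Omega\Delta I\,dx=\int_\Omega\Delta V\,dx=0$ by the Neumann condition (\ref{e3}), only the $S$-diffusion survives, and Green's identity gives
\[
\int_\Omega\left(1-\frac{S_0}{S}\right)D_1\Delta S\,dx=-D_1 S_0\int_\Omega\frac{|\nabla S|^2}{S^2}\,dx\le 0,
\]
so the diffusion never contributes positively and may be discarded in the inequality for $dW/dt$.

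For the reaction part I would use $\Lambda=d_S S_0$ to produce the dissipative term $-d_S(S-S_0)^2/S$, note that the incidence terms $Sf(V)+Sg(I)$ cancel between the $S$- and $I$-equations, and then invoke the linear upper bounds $f(V)\le f'(0)V$ and $g(I)\le g'(0)I$ from (\ref{ee11}). With the weight above cancelling the $V$-terms, the remaining infected contribution collapses, by the definition of $\mathcal{R}_0$, to $\big(S_0 g'(0)+S_0\alpha f'(0)/d_V-(\gamma+d_I)\big)I=(\gamma+d_I)(\mathcal{R}_0-1)I$. This yields
\[
\frac{dW}{dt}\le\int_\Omega\left[-d_S\frac{(S-S_0)^2}{S}+(\gamma+d_I)(\mathcal{R}_0-1)\,I\right]dx,
\]
which is $\le 0$ whenever $\mathcal{R}_0\le 1$.

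To finish, I would apply LaSalle's invariance principle. On the set where $dW/dt=0$ the dissipative term forces $S\equiv S_0$; restricting the $S$-equation to this invariant set, where $\partial S/\partial t=0$ and $\Delta S_0=0$, and using $\Lambda=d_S S_0$ gives $S_0\big(f(V)+g(I)\big)=0$, whence $V\equiv 0$ and $I\equiv 0$ by (A1). Thus the largest invariant subset of $\{dW/dt=0\}$ is $\{E_0\}$, and this argument covers the borderline $\mathcal{R}_0=1$ uniformly with $\mathcal{R}_0<1$. Together with the boundedness of solutions established earlier, which via parabolic smoothing yields the relative compactness of orbits required to invoke the invariance principle, this gives global attractivity, and combined with local stability we obtain global asymptotic stability. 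I expect the real difficulty to lie not in the algebra but in (i) verifying the compactness hypotheses that legitimize LaSalle's principle in the infinite-dimensional reaction-diffusion setting, and (ii) recognizing that the weight $S_0f'(0)/d_V$ is precisely the one that both cancels the $V$-terms and reassembles $\mathcal{R}_0$, which is what makes the threshold condition $\mathcal{R}_0\le 1$ emerge naturally.
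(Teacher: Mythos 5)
Your proposal is correct and follows essentially the same route as the paper: a Lyapunov functional of the form $\int_{\Omega}\big[S_{0}\Phi(S/S_{0})+I+cV\big]\,dx$, the linear bounds $f(V)\leq f'(0)V$ and $g(I)\leq g'(0)I$ from (\ref{ee11}), Green's identity to discard the diffusion terms, and LaSalle's invariance principle. The only (immaterial) difference is your choice $c=S_{0}f'(0)/d_V$, which cancels the $V$-terms and leaves the residual $(\gamma+d_I)(\mathcal{R}_{0}-1)I$, whereas the paper takes $c=(\gamma+d_I-S_{0}g'(0))/\alpha$, cancelling the $I$-terms and leaving $\frac{d_V(\gamma+d_I)}{\alpha}(\mathcal{R}_{0}-1)V$; both choices give $\frac{d\mathcal{L}}{dt}\leq 0$ when $\mathcal{R}_{0}\leq 1$ and force the same invariant set $\{E_{0}\}$.
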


\begin{proof}
Define a Lyapunov function
\begin{align*}
\mathcal L(t)= \int\limits_{\Omega} \mathcal L_{1}(x,t) dx,
\end{align*}
where
\begin{align*}
\mathcal L_{1}(x,t) = S_{0}\Phi \bigg (\dfrac{S(x,t)}{S_{0}}\bigg) + I(x,t)+B V(x,t),
\end{align*}
and $B$ is a positive constant to be determined later. Then, along the solutions of the system (\ref{e1}), we have
\begin{align*}
\dfrac{\partial \mathcal L_{1}(x,t)}{\partial t}
=&\ \bigg(1-\dfrac{S_{0}}{S}\bigg)\dfrac{\partial S}{\partial t}+\dfrac{\partial I}{\partial t}+B \dfrac{\partial V}{\partial t}\\
=&\ \bigg(1-\dfrac{S_{0}}{S}\bigg)(D_1\Delta S  + \Lambda  - S f(V) - Sg(I) - d_S S)\\
&\ D_2\Delta I + S f(V) +  Sg(I)  - (\gamma+d_I) I+B D_3\Delta V+B (\alpha I -  d_V  V).
\end{align*}
By equation (\ref{ee11}) and choosing $B = (\gamma+ d_I- S_{0}g'(0))/\alpha$, we obtain
\begin{align*}
\dfrac{\partial \mathcal L_{1}(x,t)}{\partial t}
\leq &  -\dfrac{d_S}{S} (S_{0}-S)^{2}+ \dfrac{d_V (\gamma+ d_I)}{\alpha}(R_{0}-1)V\\
      &+\bigg(1-\dfrac{S_{0}}{S}\bigg)D_1\Delta S+ D_2 \Delta I+ B D_{3}\Delta V.
\end{align*}
Using Green's formula and the Neumann boundary conditions in (\ref{e3}), we obtain
\begin{align*}
\int\limits_{\Omega}\bigg(1-\dfrac{S_{0}}{S}\bigg)D_1\Delta S dx= -D_1\int\limits_{\Omega}\nabla\bigg(1-\dfrac{S_{0}}{S}\bigg) \nabla S dx= -D_1 \int\limits_{\Omega}\dfrac{S_{0}}{S^2}| \nabla S|^2dx \leq 0,
\end{align*}
and
\begin{align*}
\int \limits_{\Omega}\Delta I dx= \int \limits_{\Omega}\Delta V dx=0.
\end{align*}
Using above conditions, we obtain
\begin{align*}
\dfrac{d\mathcal L(t)}{dt}=\int \limits_{\Omega} \dfrac{\partial\mathcal L_{1}(x,t)}{\partial t}dx \leq \int \limits_{\Omega}\bigg (-\dfrac{d_S}{S} (S_{0}-S)^{2}+ \dfrac{d_V (\gamma+ d_I)}{\alpha}(R_{0}-1)V - \dfrac{D_1 S_{0}}{S^2}| \nabla S|^2\bigg)dx.
\end{align*}
Therefore, $\dfrac{d\mathcal L(t)}{dt} \leq 0$ whenever  $\mathcal{R}_{0}\leq 1$. It follows that the largest invariant subset of $\big\{\dfrac{d\mathcal L(t)}{dt}=0\big\}$ is the singleton ${E_{0}}$. By LaSalle's Invariance Principle \cite{r35}, the infection-free equilibrium of the system (\ref{e1}) is globally asymptotically stable when $\mathcal{R}_{0}\leq 1$.
\end{proof}
Next, we turn our attention to show the global stability of the endemic equilibrium $E^{*}$.
\begin{theorem}
Consider a Lyapunov function
\begin{align*}
\mathcal H(t)= \int\limits_{\Omega} \mathcal H_{1}(x,t) dx,
\end{align*}
with
\begin{align*}
\mathcal H_{1}(x,t) = S^{*}\Phi \bigg (\dfrac{S}{S^{*}}\bigg) + I^{*}\Phi \bigg (\dfrac{I}{I^{*}}\bigg)+ \dfrac{S^{*}f(V^{*})}{d_V}\Phi \bigg (\dfrac{V}{V^{*}}\bigg).
\end{align*}
Then, $\mathcal H(t)$ is non-negative and is strictly minimized at the unique equilibrium $(S^{*}, I^{*}, V^{*})$, i.e. it is a valid Lyapunov function. Hence, $E^{*}=(S^{*}, I^{*}, V^{*})$ is globally asymptotically stable.
\end{theorem}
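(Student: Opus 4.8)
The plan is to verify the three defining features of a Lyapunov functional—non-negativity, a strict global minimum at $E^*$, and non-increase along trajectories—and then close the argument with LaSalle's Invariance Principle, in complete parallel with the disease-free case. The first two features are immediate: since $\Phi(x)\ge 0$ with equality only at $x=1$, each of the three summands of $\mathcal{H}_1$ is a positive multiple of a non-negative function vanishing precisely when $S=S^*$, $I=I^*$, $V=V^*$ respectively. Hence $\mathcal{H}_1\ge 0$ pointwise, $\mathcal{H}\ge 0$, and $\mathcal{H}$ attains its strict global minimum exactly at $E^*$. All the substance is therefore in proving $d\mathcal{H}/dt\le 0$ along solutions, with equality only at $E^*$.

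To compute $d\mathcal{H}/dt$ I would differentiate $\mathcal{H}_1$ in $t$, producing the factors $1-S^*/S$, $1-I^*/I$ and $\tfrac{S^*f(V^*)}{d_VV^*}\big(1-V^*/V\big)$ (the last coefficient carrying an extra $1/V^*$ from the chain rule) in front of the three right-hand sides of (\ref{e1}). The spatial parts are disposed of exactly as in the disease-free case: integrating over $\Omega$ and applying Green's formula with the Neumann conditions (\ref{e3}) turns them into $-D_1S^*\int_\Omega |\nabla S|^2/S^2\,dx$, $-D_2I^*\int_\Omega |\nabla I|^2/I^2\,dx$ and $-\tfrac{S^*f(V^*)D_3}{d_V}\int_\Omega |\nabla V|^2/V^2\,dx$, all $\le 0$. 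For the reaction part I would eliminate $\Lambda$, $\gamma+d_I$ and $\alpha$ using the equilibrium identities in (\ref{eq1}), namely $\Lambda=d_SS^*+S^*f(V^*)+S^*g(I^*)$, $(\gamma+d_I)I^*=S^*f(V^*)+S^*g(I^*)$ and $d_VV^*=\alpha I^*$; the mortality terms then collapse to $-d_S(S-S^*)^2/S\le 0$.

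The heart of the argument is the reorganisation of what remains into Volterra $\Phi$-terms. I expect the virus-to-cell contribution to assemble into
\[
S^*f(V^*)\Big[-\Phi\big(\tfrac{S^*}{S}\big)-\Phi\big(\tfrac{I^*Sf(V)}{IS^*f(V^*)}\big)-\Phi\big(\tfrac{V^*I}{VI^*}\big)-\Phi\big(\tfrac{Vf(V^*)}{V^*f(V)}\big)+\mathcal{E}_f\Big],
\]
in which the four $\Phi$-arguments multiply to $1$ so that all logarithmic terms cancel, and the cell-to-cell contribution to assemble analogously into $S^*g(I^*)\big[-\Phi(\tfrac{S^*}{S})-\Phi(\tfrac{I^*Sg(I)}{IS^*g(I^*)})-\Phi(\tfrac{Ig(I^*)}{I^*g(I)})+\mathcal{E}_g\big]$. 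Here the residuals reduce to $\mathcal{E}_f=\tfrac{(p-1)(p-q)}{p}$ with $p=f(V)/f(V^*)$, $q=V/V^*$, and $\mathcal{E}_g=\tfrac{(r-1)(r-s)}{r}$ with $r=g(I)/g(I^*)$, $s=I/I^*$.

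The main obstacle is precisely the sign of these residuals, since this is the only place the general (rather than bilinear) incidence is felt. I would settle it by a case split on $V\gtrless V^*$ (resp. $I\gtrless I^*$): conditions (A1)--(A2) make $f$ and $g$ increasing, while (\ref{ee11})/(\ref{eq5}) force $f(V)/V$ and $g(I)/I$ to be non-increasing, so $p-1$ and $p-q$ always carry opposite signs (both vanishing at $V=V^*$), giving $\mathcal{E}_f\le 0$, and identically $\mathcal{E}_g\le 0$. Combined with the non-positive diffusion and death contributions, this yields $d\mathcal{H}/dt\le 0$. Finally, equality forces $S\equiv S^*$, $\nabla S\equiv\nabla I\equiv\nabla V\equiv 0$ and every Volterra argument equal to $1$; feeding these back into (\ref{e1}) pins down $I\equiv I^*$ and $V\equiv V^*$, so the largest invariant subset of $\{d\mathcal{H}/dt=0\}$ is $\{E^*\}$, and LaSalle's Invariance Principle \cite{r35} gives the global asymptotic stability of $E^*$ when $\mathcal{R}_0>1$.
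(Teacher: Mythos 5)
Your proposal is correct and follows essentially the same route as the paper: the same Volterra-function Lyapunov functional, Green's formula with the Neumann conditions to dispose of the diffusion terms, the equilibrium identities to collapse the reaction part, and the grouping into $\Phi$-terms whose arguments multiply to $1$. Your residual $\mathcal{E}_f=\frac{(p-1)(p-q)}{p}$ with $p=f(V)/f(V^{*})$, $q=V/V^{*}$ is exactly the paper's bound $\Phi\big(\frac{f(V)}{f(V^{*})}\big)-\Phi\big(\frac{V}{V^{*}}\big)\leq\big(\frac{f(V)}{f(V^{*})}-\frac{V}{V^{*}}\big)\big(1-\frac{f(V^{*})}{f(V)}\big)$, and your sign argument via monotonicity of $f$ and of $f(V)/V$ (from (A1)--(A2) and (\ref{ee11})) is the same one the paper uses, so the two write-ups differ only in bookkeeping and in your more explicit LaSalle step at the end.
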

\begin{proof}

According to (\ref{e1}), we have
\begin{align*}
\dfrac{\partial \mathcal H_{1}(x,t)}{\partial t}
=& \ \bigg(1-\dfrac{S^{*}}{S}\bigg)\dfrac{\partial S}{\partial t}+\bigg(1-\dfrac{I^{*}}{I}\bigg)\dfrac{\partial I}{\partial t}+\dfrac{S^{*}f(V^{*})}{d_V V^{*}}\bigg(1-\dfrac{V^{*}}{V}\bigg)\dfrac{\partial V}{\partial t}\\
=&\ \bigg(1-\dfrac{S^{*}}{S}\bigg) (D_1\Delta S  +  d_S S^{*}+  S^{*}f(V^{*}) + S^{*} g(I^{*}) - S f(V) - S g(I) - d_S S )\\
  &+\bigg(1-\dfrac{I^{*}}{I}\bigg)(D_2\Delta I + S f(V) +  Sg(I)  - (\gamma+d_I) I)\\
  &+ \dfrac{S^{*}f(V^{*})}{d_V V^{*}} \bigg(1-\dfrac{V^{*}}{V}\bigg)(D_3\Delta V + \alpha I -  d_V  V)\\
=& \bigg(1-\dfrac{S^{*}}{S}\bigg)(d_S S^{*}-d_S S)+S^{*}g(I^{*})\bigg(2-\dfrac{S^{*}}{S}+\dfrac{g(I)}{g(I^{*})}-\dfrac{S g(I) I^{*}}{S^{*} g(I^{*})I}-\dfrac{I}{I^{*}}\bigg)\\
&+S^{*}f(W^{*})\bigg(3-\dfrac{S^{*}}{S}+\dfrac{f(V)}{f(V^{*})}-\dfrac{Sf(V)I^{*}}{S^{*}f(V^{*})I}-\dfrac{V^{*}I}{VI^{*}}-\dfrac{V}{V^{*}}\bigg)\\
&+\bigg(1-\dfrac{S^{*}}{S}\bigg) D_1 \Delta S +\bigg(1-\dfrac{I^{*}}{I}\bigg)D_2 \Delta I+\dfrac{S^{*}f(V^{*})}{d_V V^{*}}\bigg(1-\dfrac{V^{*}}{V}\bigg)D_{3}\Delta V\\
=&\ -\dfrac{d_S}{S}(S-S^{*})^{2}-S^{*}g(I^{*})\bigg(\Phi\bigg(\dfrac{S^{*}}{S}\bigg)+\Phi\bigg(\dfrac{S g(I)I^{*}}{S^{*}g(I^{*})I}\bigg)+\Phi\bigg(\dfrac{I}{I^{*}}\bigg)-\Phi\bigg(\dfrac{g(I)}{g(I^{*})}\bigg)\bigg)\\
&- S^{*}f(V^{*})\bigg(\Phi\bigg(\dfrac{S^{*}}{S}\bigg)+\Phi\bigg(\dfrac{Sf(V)I^{*}}{S^{*}f(V^{*})I}\bigg)+\Phi\bigg(\dfrac{V^{*}I}{VI^{*}}\bigg)+\Phi\bigg(\dfrac{V}{V^{*}}\bigg)-\Phi\bigg(\dfrac{f(V)}{f(V^{*})}\bigg)\bigg)\\
&+\bigg(1-\dfrac{S^{*}}{S}\bigg) D_1 \Delta S +\bigg(1-\dfrac{I^{*}}{I}\bigg)D_2 \Delta I+\dfrac{S^{*}f(V^{*})}{d_V V^{*}}\bigg(1-\dfrac{V^{*}}{V}\bigg)D_{3}\Delta V.
\end{align*}
Using Green's formula and the Neumann boundary conditions in (\ref{e3}), we obtain
\begin{align*}
\int\limits_{\Omega}\bigg(1-\dfrac{S^{*}}{S}\bigg)D_1\Delta S dx= -D_1\int\limits_{\Omega}\nabla\bigg(1-\dfrac{S^{*}}{S}\bigg) \nabla S dx= -D_1 \int\limits_{\Omega}\dfrac{S^{*}}{S^2}| \nabla S|^2dx \leq 0,
\end{align*}
similarly
\begin{align*}
&\int\limits_{\Omega}\bigg(1-\dfrac{I^{*}}{I}\bigg)D_2\Delta I dx= -D_2\int\limits_{\Omega}\nabla\bigg(1-\dfrac{I^{*}}{I}\bigg) \nabla I dx= -D_2 \int\limits_{\Omega}\dfrac{I^{*}}{I^2}| \nabla I|^2dx \leq 0,\\
&\dfrac{S^{*}f(V^{*})}{d_V V^{*}}\int\limits_{\Omega}\bigg(1-\dfrac{V^{*}}{V}\bigg)D_3\Delta V dx= -\dfrac{D_3 S^{*}f(V^{*})}{d_V V^{*}}\int\limits_{\Omega}\nabla\bigg(1-\dfrac{V^{*}}{V}\bigg) \nabla V dx= -\dfrac{D_3 S^{*}f(V^{*})}{d_V} \int\limits_{\Omega}\dfrac{1}{V^2}| \nabla V|^2dx \leq 0,
\end{align*}
By assumption (A2), we get
\begin{align*}
 \Phi\bigg(\dfrac{f(V)}{f(V^{*})}\bigg)-\Phi\bigg(\dfrac{V}{V^{*}}\bigg)
=&\dfrac{f(V)}{f(V^{*})}-\dfrac{V}{V^{*}}+\ln\bigg(\dfrac{V f(V^{*})}{V^{*}f(V)}\bigg)\\
\leq &\ \dfrac{f(V)}{f(V^{*})}-\dfrac{V}{V^{*}}+ \dfrac{Vf(V^{*})}{V^{*}f(V)}-1\\
=&\ \bigg(\dfrac{f(V)}{f(V^{*})}-\dfrac{V}{V^{*}}\bigg) \bigg(1-\dfrac{f(V^{*})}{f(V)}\bigg)\\
\leq &\ 0.
\end{align*}
Similarly, we have
\begin{align*}
 \Phi\bigg(\dfrac{g(I)}{g(I^{*})}\bigg)-\Phi\bigg(\dfrac{I}{I^{*}}\bigg)\leq \bigg(\dfrac{g(I)}{g(I^{*})}-\dfrac{I}{I^{*}}\bigg) \bigg(1-\dfrac{g(I^{*})}{g(I)}\bigg)\leq 0.
\end{align*}
Using above conditions, we conclude that
\begin{align*}
 \dfrac{d \mathcal H(t)}{dt}=& \int \limits_{\Omega} \dfrac{\partial \mathcal H_{1}(x,t)}{\partial t}dx\\
\leq &\int \limits_{\Omega}\bigg[-\dfrac{d_S}{S}(S-S^{*})^{2}-S^{*}g(I^{*})\bigg(\Phi\bigg(\dfrac{S^{*}}{S}\bigg)
+\Phi\bigg(\dfrac{S g(I)I^{*}}{S^{*}g(I^{*})I}\bigg)\bigg)\\
& -S^{*}f(V^{*})\bigg(\Phi\bigg(\dfrac{S^{*}}{S}\bigg)+\Phi\bigg(\dfrac{Sf(V)I^{*}}{S^{*}f(V^{*})I}\bigg)
+\Phi\bigg(\dfrac{V^{*}I}{VI^{*}}\bigg)\bigg)\\
&- \dfrac{D_1 S^{*}}{S^2}| \nabla S|^2- \dfrac{D_2 I^{*}}{I^2}| \nabla I|^2 - \dfrac{D_3 S^{*}f(V^{*})}{d_V V^2}| \nabla V|^2\bigg]dx\\
\leq & \ 0.
\end{align*}
Furthermore we have $ \dfrac{d \mathcal H(t)}{dt}=0$ only at steady state  $E^{*}=(S^{*}, I^{*}, V^{*})$. Therefore, by Lyapunov's direct method, the steady state solution  $E^{*}$ is globally asymptotically stable.
\end{proof}

\section{Dynamical behavior of the discretized model}\label{sec3}
In preceding section, we have shown that the global asymptotic stability of the equilibria for the continuous system (\ref{e1}) is completely determined by the basic reproduction number $\mathcal{R}_{0}$ by constructing appropriate Lyapunov functionals. This arises a natural question that whether the global asymptotic stability of the equilibria of the discrete system (\ref{a1}) can be preserved. In this section, we will discuss this problem. Clearly, the discretized system (\ref{a1}) has the same two steady states as the continuous system (\ref{e1}). In the following theorem, we show the system (\ref{a1}) is non-negative and bounded.
\begin{theorem} (Positivity and Boundedness)
For any $\Delta t>0$ and $\Delta x>0$, the solution of system (\ref{a1})-(\ref{a3}) is non-negative and bounded for all $k\in \mathbb{N}$.
\end{theorem}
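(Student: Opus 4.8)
The plan is to argue by induction on the time level $k$, establishing at each step first the unique solvability of the implicit scheme, then non-negativity, and finally a uniform bound. The crucial structural observation is that the nonlinear incidence terms in (\ref{a1}) are evaluated at the old time level ($f(V_n^k)$ and $g(I_n^k)$), so each of the three equations is \emph{linear} in its own unknown at level $k+1$. Moreover the three equations can be solved in cascade: the $S^{k+1}$-equation involves only $S^{k+1}$ (through the implicit diffusion) together with the known quantities $S^k, I^k, V^k$; once $S^{k+1}$ is known the $I^{k+1}$-equation is linear in $I^{k+1}$ with source $\Delta t\, S_n^{k+1}(f(V_n^k)+g(I_n^k))$; and finally the $V^{k+1}$-equation is linear in $V^{k+1}$ with source $\Delta t\,\alpha I_n^{k+1}$. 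Writing each equation in collected form, e.g.
\[
\big(1+2r_1+\Delta t(f(V_n^k)+g(I_n^k)+d_S)\big)S_n^{k+1}-r_1 S_{n+1}^{k+1}-r_1 S_{n-1}^{k+1}=S_n^k+\Delta t\,\Lambda,
\]
with $r_i=\Delta t\,D_i/(\Delta x)^2$, the coefficient matrix (after imposing the ghost-point conditions (\ref{a3})) is tridiagonal with positive diagonal and non-positive off-diagonals, and is strictly diagonally dominant, since its diagonal exceeds the off-diagonal row sum by $1+\Delta t(\cdots)>0$. Hence it is a nonsingular $M$-matrix and the scheme is uniquely solvable at every step.

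For non-negativity I would run a discrete minimum principle inside the cascade. Assume inductively that $S_n^k,I_n^k,V_n^k\ge 0$ for all $n$ (the base case $k=0$ is (\ref{a2})). Suppose $\min_n S_n^{k+1}<0$, attained at some index $n_0$; then the neighbours satisfy $S_{n_0\pm1}^{k+1}\ge S_{n_0}^{k+1}$ (at a boundary node the ghost condition in (\ref{a3}) gives the same inequality with a single neighbour), so moving the diffusion contribution to the left of the collected equation yields $\big(1+\Delta t(f(V_{n_0}^k)+g(I_{n_0}^k)+d_S)\big)S_{n_0}^{k+1}\ge S_{n_0}^k+\Delta t\,\Lambda\ge 0$, forcing $S_{n_0}^{k+1}\ge 0$, a contradiction. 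Thus $S^{k+1}\ge 0$. Since $f,g\ge 0$ by (A1), the source term of the $I^{k+1}$-equation is then non-negative, and the identical argument gives $I^{k+1}\ge 0$; the source $\alpha I_n^{k+1}$ of the $V^{k+1}$-equation is in turn non-negative, so $V^{k+1}\ge 0$. This closes the induction.

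For boundedness I would sum over the spatial grid. Summing the discrete second difference of any grid function from $n=0$ to $M$ telescopes to $(S_{M+1}^{k+1}-S_M^{k+1})-(S_0^{k+1}-S_{-1}^{k+1})=0$ by the discrete Neumann conditions (\ref{a3}), so all diffusion contributions vanish upon summation. Adding the $S$- and $I$-equations cancels the incidence terms, and writing $T^k=\sum_{n=0}^M(S_n^k+I_n^k)$ I obtain, with $\underline d=\min\{d_S,\gamma+d_I\}$,
\[
T^{k+1}\big(1+\Delta t\,\underline d\big)\le T^k+\Delta t\,(M+1)\Lambda,
\]
i.e.\ $T^{k+1}\le a T^k+b$ with $a=(1+\Delta t\,\underline d)^{-1}\in(0,1)$. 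Iterating this contraction gives $T^k\le\max\{T^0,(M+1)\Lambda/\underline d\}$ for all $k$, and since every term is non-negative each $S_n^k$ and $I_n^k$ is bounded by $T^k$. Feeding the resulting bound on $\sum_n I_n^{k+1}$ into the summed $V$-equation produces the analogous contractive recurrence $\sum_n V_n^{k+1}(1+\Delta t\,d_V)\le\sum_n V_n^k+\Delta t\,\alpha\sum_n I_n^{k+1}$, which bounds $\sum_n V_n^k$ and hence each $V_n^k$.

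The step I expect to be the main obstacle is the non-negativity argument, precisely because the scheme is implicit: one cannot read off the sign of $S_n^{k+1}$ from a forward update, and the three populations are coupled through the incidence terms. Two ideas make it go through: (i) recognizing the $M$-matrix / strict diagonal dominance structure so that the implicit solve preserves the sign of the data, realized concretely through the discrete minimum principle; and (ii) exploiting the cascading dependence $S\to I\to V$ so that at each stage the relevant source term is already known to be non-negative. The boundedness, by contrast, is essentially a discrete Gr\"onwall estimate once the diffusion is eliminated by the telescoping identity, and is routine.
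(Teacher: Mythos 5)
Your proposal is correct and follows essentially the same route as the paper: the cascading solves $S\to I\to V$ with the incidence terms lagged at level $k$, the strictly diagonally dominant tridiagonal ($M$-matrix) structure for sign preservation, and the telescoping spatial sum plus a contractive recurrence for boundedness. The only difference is cosmetic — you justify the sign preservation by an explicit discrete minimum principle, whereas the paper simply invokes the inverse-positivity of $M$-matrices; your version is self-contained but proves the same fact.
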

\begin{proof}
The positivity of the solutions of the discretized system (\ref{a1}) can be proved using the M-matrix theory \cite{r26}.  From the first equation of system (\ref{a1}), we get
\begin{align*}
\mathcal A^{k}S^{k+1}=S^{k}+\lambda \Delta t,
\end{align*}
where
\begin{align*}
\mathcal A^{k}=\begin{pmatrix}
a_{0}^{k}			& a 			& 0 			&\cdots 		& 0 & 0 & 0 \\	
a 				& a_{1}^{k} 	& a 			&\cdots 		& 0 & 0 & 0 \\
0 				& a 			& a_{2}^{k} 	&\cdots 		& 0 & 0 & 0 \\
\vdots 			& \vdots 		& \vdots 		& \ddots 		&\vdots &\vdots &\vdots \\
0 				& 0 			& 0 			& \cdots 		& a_{M-2}^{k} & a & 0 \\
0 				& 0 			& 0 			& \cdots 		& a & a_{M-1}^{k} & a \\
0 				& 0 			& 0 			& \cdots 		& 0 & a & a_{M}^{k}
\end{pmatrix},
\end{align*}
$ \lambda=(\Lambda,\Lambda,\cdots,\Lambda)^{T}$ and the coefficients $a=-D_1\Delta t/(\Delta x)^{2},\ a_{0}^{k}=1+ D_1 \Delta t /(\Delta x)^{2}+ \Delta t (f(V_{0}^{k})+g(I_{0}^{k})+ d_S), \ a_{M}^{k}=1+ D_1 \Delta t /(\Delta x)^{2}+ \Delta t (f(V_{M}^{k}) + g(I_{M}^{k})+ d_S)$ and $a_{i}^{k}=1 + 2D_1 \Delta t /(\Delta x)^{2}+ \Delta t (f(V_{i}^{k})+g(I_{i}^{k})+ d_S)$ with $i=1,2,\cdots,M-1$. It is clear that $\mathcal A^{k}$ is a strictly diagonally dominant matrix. Thus, the first equation of system (\ref{a1}) is equivalent to
\begin{align*}
S^{k+1}=(\mathcal A^{k})^{-1}(S^{k}+\lambda \Delta t)>0.
\end{align*}
From the second equation of the system (\ref{a1}), we have
\begin{align*}
\mathcal B I^{k+1}=I^{k}+ \Delta t T^{k+1},
\end{align*}
where $T^{k+1}=(S_{0}^{k+1}(f(V_{0}^{k})+ g(I_{0}^{k})),\ S_{1}^{k+1}(f(V_{1}^{k})+g(I_{1}^{k})),\cdots,
S_{M}^{k+1}(f(V_{M}^{k})+g(I_{M}^{k})))^{T}$ and
\begin{align*}
\mathcal B=\begin{pmatrix}
b_{1}	 	& b_{2}	 & 0 		&\cdots 		& 0 & 0 & 0 \\
b_{2} 		& b_{3} 	& b_{2} 	&\cdots 		& 0 & 0 & 0 \\
0 		& b_{2} 	& b_{3}  	&\cdots 		& 0 & 0 & 0 \\
\vdots 	& \vdots 	& \vdots 	& \ddots 		&\vdots &\vdots &\vdots \\
0 		& 0 		& 0 		& \cdots 		& b_{3} & b_{2} & 0 \\
0 		& 0 		& 0 		& \cdots 		& b_{2} & b_{3} & b_{2} \\
0 		& 0 		& 0 		& \cdots 		& 0 & b_{2} & b_{1}
\end{pmatrix},
\end{align*}
with $b_{1}=1+ D_2 \Delta t /(\Delta x)^{2}+\Delta t (\gamma + d_I),\ b_{2}=- D_2\Delta t/(\Delta x)^{2}$ and $b_{3}=1+ 2D_2 \Delta t /(\Delta x)^{2}+\Delta t (\gamma + d_I)$. Since $\mathcal B$ is a M-matrix, we get
\begin{align*}
I^{k+1}=\mathcal B^{-1}(I^{k}+\Delta t T^{k+1}).
\end{align*}
Similarly, from the third equation of system (\ref{a1}), we have
\begin{align*}
\mathcal C V^{k+1}=V^{k}+ \alpha \Delta t I^{k+1},
\end{align*}
where
\begin{align*}
\mathcal C=\begin{pmatrix}
c_{1} 		& c_{2} 		& 0 			&\cdots 		& 0 & 0 & 0 \\
c_{2} 		& c_{3} 		& c_{2} 		&\cdots 		& 0 & 0 & 0 \\
0 		& c_{2} 		& c_{3}  		&\cdots 		& 0 & 0 & 0 \\
\vdots 	& \vdots 		& \vdots 		& \ddots 		&\vdots &\vdots &\vdots \\
0 		& 0 			& 0 			& \cdots 		& c_{3} & c_{2} & 0 \\
0 		& 0 			& 0 			& \cdots 		& c_{2} & c_{3} & c_{2} \\
0 		& 0 			& 0 			& \cdots 		& 0 & c_{2} & c_{1}
\end{pmatrix}
\end{align*}
with $c_{1}=1+D_{3} \Delta t /(\Delta x)^{2}+d_V \Delta t ,\ c_{2}=-D_{3}\Delta t/(\Delta x)^{2}$ and $c_{3}=1+2D_{3} \Delta t /(\Delta x)^{2}+ d_V \Delta t $. Since $\mathcal C$ is a M-matrix, we obtain
\begin{align*}
V^{k+1}=\mathcal C^{-1}(V^{k}+\alpha \Delta t I^{k+1}).
\end{align*}
Since all parameters in the system (\ref{a1}) are positive, it is easy to see that the solution remains non-negative for all $k\in \mathbb{N}$.\\
Next, we prove the boundedness of the solution. Define a sequence $\{G^{k}\}$ as follows:
\begin{align*}
\mathcal Q^{k}=\sum_{n=0}^{M}(S_{n}^{k}+I_{n}^{k}).
\end{align*}
It follows from the first two equations of system (\ref{a1}) that
\begin{align*}
\mathcal Q^{k+1} - \mathcal Q^{k}&=\Lambda (M+1)\Delta t-d_S \Delta t \sum_{n=0}^{M}S_{n}^{k+1} - (\gamma + d_I)\Delta t \sum_{n=0}^{M}I_{n}^{k+1}\\
& \leq \Lambda (M+1)\Delta t - d \Delta t \mathcal Q^{k+1}.
\end{align*}
where  $d=\text{min}\{d_S, d_I\}$. Hence, we have
\begin{align*}
\mathcal Q^{k+1}\leq \dfrac{\Lambda (M+1)\Delta t}{1+d \Delta t}+\dfrac{\mathcal Q^{k}}{1+d \Delta t}.
\end{align*}
By mathematical induction, we obtain
\begin{align*}
\limsup_{k\rightarrow +\infty} \mathcal Q^{k}\leq N(M+1).
\end{align*}
By the third equation of the system (\ref{a1}), we get
\begin{align*}
\sum_{n=0}^{M}V_{n}^{k+1}=\dfrac{\alpha\Delta t}{1+d_V \Delta t} \sum_{n=0}^{M} I_{n}^{k+1}+\dfrac{1}{1+d_V\Delta t}\sum_{n=0}^{M}V_{n}^{k}.
\end{align*}
Since $\{G^{k}\}$ is bounded, there exists a positive constant $\xi$ such that $\sum_{n=0}^{M}I_{n}^{k}\leq \xi$. Thus, we have
\begin{align*}
\limsup_{k\rightarrow +\infty} \sum_{n=0}^{M}V_{n}^{k+1}\leq \dfrac{\alpha \xi}{d_V}.
\end{align*}
This completes the proof.
\end{proof}
\subsection{Global Stability} In this section, we establish the global asymptotic stability of the steady states $E_0$ and $E^*$ of the discrete system (\ref{a1}), by constructing discrete Lyapunov functions.
\begin{theorem}\label{theorem3.2}
For any $\Delta t>0$ and $\Delta x>0$, if $\mathcal{R}_{0}\leq 1$, then the disease-free equilibrium $E_{0}$ of system (\ref{a1}) is globally asymptotically stable.
\end{theorem}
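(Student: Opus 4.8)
The plan is to transplant the continuous Lyapunov argument for the disease-free equilibrium of (\ref{e1}) onto the mesh. I would set
$$\mathcal{L}^{k}=\sum_{n=0}^{M}\left[\,S_{0}\Phi\!\left(\frac{S_{n}^{k}}{S_{0}}\right)+I_{n}^{k}+B\,V_{n}^{k}\,\right],\qquad B=\frac{\gamma+d_{I}-S_{0}g'(0)}{\alpha},$$
the discrete counterpart of $\mathcal{L}_1$, with the same multiplier $B$. Here $B=\tfrac{\gamma+d_I}{\alpha}\left(1-\mathcal{R}_{02}\right)\ge 0$ because $\mathcal{R}_{02}=S_{0}g'(0)/(\gamma+d_{I})\le\mathcal{R}_{0}\le 1$, and $B>0$ whenever virus-to-cell transmission is present. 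Each summand is non-negative since $\Phi\ge 0$ and $I_{n}^{k},V_{n}^{k}\ge 0$ by the positivity theorem, so $\mathcal{L}^{k}$ is bounded below and vanishes only at $E_0$.

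The engine of the proof is the elementary inequality $\ln y\le y-1$, which yields the discrete Volterra estimate
$$S_{0}\left[\Phi\!\left(\frac{S_{n}^{k+1}}{S_{0}}\right)-\Phi\!\left(\frac{S_{n}^{k}}{S_{0}}\right)\right]\le\left(1-\frac{S_{0}}{S_{n}^{k+1}}\right)\!\left(S_{n}^{k+1}-S_{n}^{k}\right).$$
I would apply this to the $S$-part, leave the $I$- and $BV$-parts as plain differences, and substitute the three equations of (\ref{a1}) for $S_{n}^{k+1}-S_{n}^{k}$, $I_{n}^{k+1}-I_{n}^{k}$ and $V_{n}^{k+1}-V_{n}^{k}$. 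The diffusion contributions are handled by a discrete summation by parts: using the Neumann conditions (\ref{a3}), the $I$- and $V$-Laplacians sum to zero over $n$, while the $S$-term produces
$$-\frac{D_{1}S_{0}}{(\Delta x)^{2}}\sum_{n=0}^{M-1}\frac{\left(S_{n+1}^{k+1}-S_{n}^{k+1}\right)^{2}}{S_{n}^{k+1}S_{n+1}^{k+1}}\le 0,$$
the exact discrete analogue of $-D_{1}\int_\Omega (S_0/S^2)|\nabla S|^2$.

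For the reaction part I would use $\Lambda=d_{S}S_{0}$ and $B\alpha=\gamma+d_{I}-S_{0}g'(0)$ to cancel the $(\gamma+d_I)I_{n}^{k+1}$ terms, then invoke the concavity bounds (\ref{ee11}), $f(V)\le f'(0)V$ and $g(I)\le g'(0)I$, to reach a pointwise estimate whose genuinely dissipative part is
$$-\frac{d_{S}\left(S_{n}^{k+1}-S_{0}\right)^{2}}{S_{n}^{k+1}}+\frac{d_{V}(\gamma+d_{I})}{\alpha}\,(\mathcal{R}_{0}-1)\,V_{n}^{k+1},$$
both terms being $\le 0$ when $\mathcal{R}_{0}\le 1$. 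The main obstacle, absent in the continuous model, is the time-level mismatch built into the NSFD scheme: the incidences are evaluated at level $k$ (as $f(V_{n}^{k}),g(I_{n}^{k})$) whereas the removal terms sit at level $k+1$. Consequently bounding $f(V_{n}^{k})\le f'(0)V_{n}^{k}$ and $g(I_{n}^{k})\le g'(0)I_{n}^{k}$ leaves a residue $S_{0}g'(0)(I_{n}^{k}-I_{n}^{k+1})+S_{0}f'(0)(V_{n}^{k}-V_{n}^{k+1})$ with no definite sign pointwise in $k$, so the naive $\mathcal{L}^{k}$ need not be monotone. I expect this to be the crux, and I would resolve it as in the discrete-dynamical-systems literature by augmenting the functional,
$$\mathcal{W}^{k}=\mathcal{L}^{k}+\Delta t\sum_{n=0}^{M}\left[S_{0}g'(0)\,I_{n}^{k}+S_{0}f'(0)\,V_{n}^{k}\right],$$
so that the residue telescopes and $\mathcal{W}^{k+1}-\mathcal{W}^{k}$ is bounded above solely by the dissipative terms, hence $\le 0$ for $\mathcal{R}_{0}\le 1$.

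Finally, since $\mathcal{W}^{k}$ is non-increasing and bounded below it converges, so $\mathcal{W}^{k+1}-\mathcal{W}^{k}\to 0$; this forces $S_{n}^{k}\to S_{0}$, the discrete spatial gradients to vanish, and, when $\mathcal{R}_{0}<1$, $V_{n}^{k}\to 0$. I would then invoke a discrete LaSalle invariance principle and verify that the largest invariant set on which every dissipative term vanishes reduces to $\{E_{0}\}$, using the $I$- and $V$-equations of (\ref{a1}) to propagate $S\to S_{0}$, $V\to 0$ into $I\to 0$. The borderline case $\mathcal{R}_{0}=1$ requires extra care because the coefficient of $V_{n}^{k+1}$ then degenerates to zero; there I would instead exploit the strict concavity gaps $g(I)<g'(0)I$ and $f(V)<f'(0)V$ for $I,V>0$ to pin the invariant set down to $E_{0}$, completing the proof of global asymptotic stability.
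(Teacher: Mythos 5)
Your proposal is correct and takes essentially the same route as the paper: the augmented functional $\mathcal{W}^{k}$ you introduce to absorb the explicit/implicit time-level mismatch is, up to an overall factor of $\Delta t$ and a harmless change of the $V$-weight (you take $S_{0}f'(0)$ where the paper takes $d_{V}(\gamma+d_{I}-S_{0}g'(0))/\alpha$, which merely shifts the residual dissipative term from $V_{n}^{k}$ to $V_{n}^{k+1}$), exactly the paper's Lyapunov function with coefficients $(1+\rho_{0}\Delta t)$ and $\rho_{1}(1+\rho_{2}\Delta t)$. All remaining ingredients --- the inequality $\ln x\le x-1$, the discrete summation by parts under the Neumann conditions (\ref{a3}), the concavity bounds (\ref{ee11}), and the monotone-convergence/invariance endgame --- coincide with the paper's argument.
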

\begin{proof}
Define a discrete Lyapunov function as follows
\begin{align*}
\mathcal L^{k}=\sum_{n=0}^{M}\dfrac{1}{\Delta t}\left[S_{0}\Phi \left(\dfrac{S_{n}^{k}}{S_{0}}\right)+(1+\rho_{0}\Delta t) I_{n}^{k}+ \rho_{1}(1+\rho_{2}\Delta t)V_{n}^{k}\right],
\end{align*}
where $\rho_{0},\ \rho_{1}$ and $\rho_{2}$ are positive constant to be determined later. Applying the inequality $\ln x \leq x-1$, the difference of $\mathcal L^{k}$ satisfies
\begin{align*}
\mathcal L^{k+1}- \mathcal L^{k} 
= & \sum_{n=0}^{M}\dfrac{1}{\Delta t}\bigg[S_{n}^{k+1}-S_{n}^{k}+S_{0}\ln \left(\dfrac{S_{n}^{k}}{S_{n}^{k+1}}\right)+(1+\rho_{0}\Delta t) (I_{n}^{k+1}-I_{n}^{k})+ \rho_{1}(1+\rho_{2}\Delta t)(V_{n}^{k+1}-V_{n}^{k})\bigg]\\
 \leq& \sum_{n=0}^{M}\dfrac{1}{\Delta t}\bigg[\left(1-\dfrac{S_{0}}{S_{n}^{k+1}}\right)(S_{n}^{k+1}-S_{n}^{k})+(1+\rho_{0}\Delta t) (I_{n}^{k+1}-I_{n}^{k}) + \rho_{1}(1+\rho_{2}\Delta t)(V_{n}^{k+1}-V_{n}^{k})\bigg]\\
 =& \sum_{n=0}^{M}\bigg[\left(1-\dfrac{S_{0}}{S_{n}^{k+1}}\right)(\Lambda - S_{n}^{k+1}f(V_{n}^{k})-S_{n}^{k+1}g(I_{n}^{k})- d_S S_{n}^{k+1})+ S_{n}^{k+1}f(V_{n}^{k})+S_{n}^{k+1}g(I_{n}^{k})\\
&  - (\gamma + d_I) I_{n}^{k+1} + \rho_{0}(I_{n}^{k+1}-I_{n}^{k}) + \rho_{1}(\alpha I_{n}^{k+1}- d_V V_{n}^{k+1})+\rho_{1}\rho_{2}(V_{n}^{k+1}-V_{n}^{k})\bigg]+ \mathcal R^{k}\\
=& \sum_{n=0}^{M}\bigg[-\dfrac{d_S}{S_{n}^{k+1}}(S_{n}^{k+1}-S_{0})^{2}+S_{0}f(V_{n}^{k})+S_{0}g(I_{n}^{k})-(\gamma+d_I)I_{n}^{k+1}+\rho_{0}(I_{n}^{k+1}-I_{n}^{k}) \\
&+ \rho_{1}(\alpha I_{n}^{k+1} - d_V V_{n}^{k+1})+\rho_{1}\rho_{2}(V_{n}^{k+1}-V_{n}^{k})\bigg]+ \mathcal R^{k},
\end{align*}
where
\begin{align*}
\mathcal R^{k}
=& \sum_{n=0}^{M}\dfrac{1}{(\Delta x)^{2}}\bigg[D_1 \left(1-\dfrac{S_{0}}{S_{n}^{k+1}}\right)\left(S_{n+1}^{k+1}-2S_{n}^{k+1}+S_{n-1}^{k+1}\right)+D_2 \left(I_{n+1}^{k+1}-2I_{n}^{k+1}+I_{n-1}^{k+1}\right)\\
&+ \rho_{1}D_{3}\left(V_{n+1}^{k+1}-2V_{n}^{k+1}+V_{n-1}^{k+1}\right)\bigg]\\
=& \sum_{n=0}^{M}\dfrac{1}{(\Delta x)^{2}}\bigg[D_1 \left(S_{n+1}^{k+1}-2S_{n}^{k+1}+S_{n-1}^{k+1}\right)- S_{0}D_1\left(\dfrac{S_{n+1}^{k+1}}{S_{n}^{k+1}}-2+\dfrac{S_{n-1}^{k+1}}{S_{n}^{k+1}}\right)\\
&+D_2 \left(I_{n+1}^{k+1}-2I_{n}^{k+1}+I_{n-1}^{k+1}\right)+ \rho_{1}D_{3} \left(V_{n+1}^{k+1}-2V_{n}^{k+1}+V_{n-1}^{k+1}\right)\bigg].
\end{align*}
Using the arithmetic-geometric inequality
\begin{equation}\label{a4}
2-\dfrac{S_{n+1}^{k+1}}{S_{n}^{k+1}}-\dfrac{S_{n}^{k+1}}{S_{n+1}^{k+1}}\leq 0,\quad \text{for}\ n\in \{0,1,\cdots,M-1\},
\end{equation}
we get
\begin{align*}
\mathcal R^{k}
\leq & \dfrac{1}{(\Delta x)^{2}}\bigg[D_1\left(S_{M+1}^{k+1}-S_{M}^{k+1}+S_{-1}^{k+1}-S_{0}^{k+1}\right)
-S_{0}D_1 \left(\dfrac{S_{-1}^{k+1}}{S_{0}^{k+1}}-2+\dfrac{S_{M+1}^{k+1}}{S_{M}^{k+1}}\right)\\
&+D_2 \left(I_{M+1}^{k+1}-I_{M}^{k+1}+I_{-1}^{k+1}-I_{0}^{k+1}\right)+ \rho_{1}D_{3}\left(V_{M+1}^{k+1}-V_{M}^{k+1}+V_{-1}^{k+1}-W_{0}^{k+1}\right)\bigg]\\
=&\ 0.
\end{align*}
By using equation (\ref{ee11}), we obtain
\begin{align*}
\mathcal L^{k+1}- \mathcal L^{k}\leq &\sum_{n=0}^{M}\bigg[-\dfrac{d_S}{S_{n}^{k+1}}(S_{n}^{k+1}-S_{0})^{2}+S_{0}f'(0)V_{n}^{k}+S_{0}g'(0)I_{n}^{k}-(\gamma+d_I)I_{n}^{k+1}+\rho_{0}(I_{n}^{k+1}-I_{n}^{k})\\
&+ \rho_{1}(\alpha I_{n}^{k+1}-d_V V_{n}^{k+1})+\rho_{1}\rho_{2}(V_{n}^{k+1}-V_{n}^{k})\bigg].
\end{align*}
Letting $\rho_{0}=S_{0}g'(0),\ \rho_{1}=(\gamma + d_I - S_{0}g'(0))/\alpha$ and $\rho_{2}=d_V$, we get
\begin{align*}
\mathcal L^{k+1}- \mathcal L^{k}\leq &\sum_{n=0}^{M}\bigg[-\dfrac{d_S}{S_{n}^{k+1}}(S_{n}^{k+1}-S_{0})^{2}+\dfrac{d_V(\gamma+d_I)}{\alpha}(\mathcal{R}_{0}-1)V_{n}^{k}\bigg].
\end{align*}
If $\mathcal{R}_{0}\leq 1$, for all $k \in\mathbb{N}$ we get
\begin{align*}
\mathcal L^{k+1}- \mathcal L^{k}\leq 0,
\end{align*}
which gives that $\{\mathcal L^{k}\}_{k \in\mathbb{N}}$ is a monotone decreasing sequence. That means there exists a constant $\tilde{\mathcal L}\geq 0$ such that $\lim_{k\rightarrow +\infty} \mathcal L^{k}=\tilde{\mathcal L}$ and we have $\lim_{k\rightarrow +\infty} (\mathcal L^{k+1}- \mathcal L^{k})=0$. Arguing on the lines of \cite{r26} to system (\ref{a1}), we obtain
\begin{align*}
\lim_{k\rightarrow +\infty} S_{n}^{k}=S_{0}, \ \lim_{k\rightarrow +\infty} I_{n}^{k}=0 \quad \text{and} \quad \lim_{k\rightarrow +\infty} V_{n}^{k}=0,
\end{align*}
for all $n\in \{0,1,\cdots,M\}$ and when $\mathcal{R}_{0}\leq 1$. This completes the proof.
\end{proof}
Next, we discuss the global stability of the endemic equilibrium $E^{*}$ when $\mathcal{R}_{0}>1$.
\begin{theorem}\label{theorem3.3}
For any $\Delta t>0$ and $\Delta x>0$, then the endemic equilibrium $E^{*}$ of system (\ref{a1}) is globally asymptotically stable when $\mathcal{R}_{0}> 1$,.
\end{theorem}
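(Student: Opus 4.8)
The plan is to transplant the continuous Lyapunov functional used for the global stability of $E^{*}$ into the NSFD setting, following the template already worked out in Theorem \ref{theorem3.2}. First I would define the discrete functional
\[
\mathcal H^{k}=\sum_{n=0}^{M}\dfrac{1}{\Delta t}\left[S^{*}\Phi\left(\dfrac{S_{n}^{k}}{S^{*}}\right)+I^{*}\Phi\left(\dfrac{I_{n}^{k}}{I^{*}}\right)+\dfrac{S^{*}f(V^{*})}{d_V}\Phi\left(\dfrac{V_{n}^{k}}{V^{*}}\right)\right],
\]
which is nonnegative and vanishes only at $E^{*}$, since $\Phi\ge 0$ with equality iff its argument equals $1$. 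Existence and uniqueness of $E^{*}$ under $\mathcal R_{0}>1$ are guaranteed by the earlier theorem, so the functional is well defined.

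Next I would estimate the one-step difference $\mathcal H^{k+1}-\mathcal H^{k}$. The elementary inequality $\ln x\le x-1$ gives, for each species $U\in\{S,I,V\}$ with equilibrium value $U^{*}$,
\[
U^{*}\left[\Phi\left(\dfrac{U_{n}^{k+1}}{U^{*}}\right)-\Phi\left(\dfrac{U_{n}^{k}}{U^{*}}\right)\right]\le\left(1-\dfrac{U^{*}}{U_{n}^{k+1}}\right)(U_{n}^{k+1}-U_{n}^{k}),
\]
exactly as in the disease-free case. Substituting the three equations of scheme (\ref{a1}) for the increments $U_{n}^{k+1}-U_{n}^{k}$ then splits the bound into a reaction part and a diffusion part $\mathcal R^{k}$. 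For the diffusion part, each of the three terms now carries a nonlinear weight $1-U^{*}/U_{n}^{k+1}$; expanding and summing, the telescoping pieces cancel through the discrete Neumann conditions (\ref{a3}), while the leftover factors $-D\,U^{*}/(\Delta x)^{2}\sum_{n}\big(U_{n+1}^{k+1}/U_{n}^{k+1}-2+U_{n-1}^{k+1}/U_{n}^{k+1}\big)$ are controlled by the arithmetic--geometric inequality (\ref{a4}) and its analogues for $I$ and $V$, giving $\mathcal R^{k}\le 0$. This is the same mechanism as in Theorem \ref{theorem3.2}, merely applied to all three equations instead of one.

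For the reaction part, I would insert the equilibrium identities $\Lambda=S^{*}f(V^{*})+S^{*}g(I^{*})+d_{S}S^{*}$, $(\gamma+d_I)I^{*}=S^{*}f(V^{*})+S^{*}g(I^{*})$ and $d_V V^{*}=\alpha I^{*}$ and regroup. As in the continuous computation, this collapses into $-\frac{d_S}{S_{n}^{k+1}}(S_{n}^{k+1}-S^{*})^{2}$ together with clusters $-S^{*}g(I^{*})(\Phi(\cdots)+\cdots)$ and $-S^{*}f(V^{*})(\Phi(\cdots)+\cdots)$, each rendered nonpositive after absorbing the differences $\Phi\big(g(I_{n}^{k})/g(I^{*})\big)-\Phi\big(I_{n}^{k}/I^{*}\big)$ and $\Phi\big(f(V_{n}^{k})/f(V^{*})\big)-\Phi\big(V_{n}^{k}/V^{*}\big)$, which are $\le 0$ by assumption (A2) and the inequalities (\ref{eq5}). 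Hence $\mathcal H^{k+1}-\mathcal H^{k}\le 0$, with equality only at $E^{*}$. Once monotonicity of $\{\mathcal H^{k}\}$ is in hand, the sequence converges, so $\mathcal H^{k+1}-\mathcal H^{k}\to 0$, which forces $(S_{n}^{k},I_{n}^{k},V_{n}^{k})\to E^{*}$ by the LaSalle-type argument of \cite{r26}, completing the proof.

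The step I expect to be the main obstacle is the reaction-term bookkeeping. Because the NSFD scheme evaluates the incidences $f$ and $g$ at the old level $k$ while the Volterra differences generate factors at level $k+1$, the cancellations that are automatic in the continuous proof require careful tracking of which arguments sit at level $k$ versus $k+1$, and the (A2)-based estimates must be applied to these mixed-time ratios rather than to a single-time quantity; verifying that the cross terms still assemble into complete Volterra clusters $\Phi(\cdot)$ is where the delicate algebra lies.
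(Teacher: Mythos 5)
You have correctly located the crux of the argument --- the mismatch between the incidence terms $f(V_n^k)$, $g(I_n^k)$ evaluated at level $k$ and the loss terms $-(\gamma+d_I)I_n^{k+1}$, $-d_V V_n^{k+1}$ evaluated at level $k+1$ --- but your proposed functional cannot resolve it, and "careful tracking" with that functional will not close the estimate. The paper does not use the naive transplant of the continuous functional; it uses
\begin{align*}
\mathcal H^{k}=\sum_{n=0}^{M}\dfrac{1}{\Delta t}\bigg[S^{*}\Phi \left(\dfrac{S_{n}^{k}}{S^{*}}\right)+\bigl(I^{*}+S^{*}g(I^{*})\Delta t\bigr)\Phi \left(\dfrac{I_{n}^{k}}{I^{*}}\right)+\dfrac{S^{*}f(V^{*})}{d_V}\bigl(1+d_V \Delta t\bigr)\Phi\left(\dfrac{V_{n}^{k}}{V^{*}}\right) \bigg],
\end{align*}
i.e.\ with $O(\Delta t)$ correction terms in the coefficients of the $I$ and $V$ blocks, exactly parallel to the factors $(1+\rho_0\Delta t)$ and $\rho_1(1+\rho_2\Delta t)$ already present in the disease-free case of Theorem \ref{theorem3.2} (which your own template invokes, but whose coefficient structure you dropped). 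These corrections contribute, after the division by $\Delta t$, the order-one quantities $S^{*}g(I^{*})\bigl(\tfrac{I_{n}^{k+1}}{I^{*}}-\tfrac{I_{n}^{k}}{I^{*}}+\ln \tfrac{I_{n}^{k}}{I_{n}^{k+1}}\bigr)$ and $S^{*}f(V^{*})\bigl(\tfrac{V_{n}^{k+1}}{V^{*}}-\tfrac{V_{n}^{k}}{V^{*}}+\ln \tfrac{V_{n}^{k}}{V_{n}^{k+1}}\bigr)$, which are precisely what converts the level-$(k+1)$ terms $\tfrac{I_n^{k+1}}{I^*}$ and $\tfrac{V_n^{k+1}}{V^*}$ (produced by the implicit loss terms via $(\gamma+d_I)I^{*}=S^{*}f(V^{*})+S^{*}g(I^{*})$ and $d_VV^{*}=\alpha I^{*}$) into the level-$k$ terms $\tfrac{I_n^k}{I^*}$ and $\tfrac{V_n^k}{V^*}$ needed to pair with $\Phi\bigl(g(I_n^k)/g(I^*)\bigr)$ and $\Phi\bigl(f(V_n^k)/f(V^*)\bigr)$ under assumption (A2).

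With your functional the difference $\mathcal H^{k+1}-\mathcal H^{k}$ leaves behind a sign-indefinite remainder of the form $S^{*}g(I^{*})\bigl(\tfrac{I_{n}^{k}}{I^{*}}-\tfrac{I_{n}^{k+1}}{I^{*}}\bigr)+S^{*}f(V^{*})\bigl(\tfrac{V_{n}^{k}}{V^{*}}-\tfrac{V_{n}^{k+1}}{V^{*}}\bigr)$ plus mismatched logarithms, so monotonicity of $\{\mathcal H^{k}\}$ does not follow. The rest of your outline (the $\ln x\le x-1$ step, the telescoping of the diffusion part via the boundary conditions (\ref{a3}) and the arithmetic--geometric inequality (\ref{a4}), the use of (A2) for the $\Phi$-differences, and the concluding limit argument along the lines of \cite{r26}, including extracting $\lim_k I_n^k=I^*$ from the first equation of the scheme) matches the paper; the missing idea is the $\Delta t$-weighted coefficients in the Lyapunov functional.
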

\begin{proof}
We define the following discretized Lyapunov function
\begin{align*}
\mathcal H^{k}=\sum_{n=0}^{M}\dfrac{1}{\Delta t}\bigg[S^{*}\Phi \left(\dfrac{S_{n}^{k}}{S^{*}}\right)+(I^{*}+S^{*}g(I^{*})\Delta t)\Phi \left(\dfrac{I_{n}^{k}}{I^{*}}\right)+\dfrac{S^{*}f(V^{*})}{d_V}(1+d_V \Delta t)\Phi\left(\dfrac{V_{n}^{k}}{V^{*}}\right) \bigg].
\end{align*}
Obviously, $\mathcal H^{k}\geq 0$ for all $k\in \mathbb{N}$ with the equality holds if and only if $S_{n}^{k}=S^{*},\ I_{n}^{k}=I^{*}$ and $V_{n}^{k}=V^{*}$ for all $n\in \{0,1,\cdots,M\}$ and $k\in \mathbb{N}$. Using $\ln x \leq x-1$, the difference of $\mathcal H^{k}$ satisfies
\begin{align*}
\mathcal H^{k+1}& - \mathcal H^{k}\\ 
=& \sum_{n=0}^{M}\dfrac{1}{\Delta t}\bigg[S_{n}^{k+1}-S_{n}^{k}+S^{*}\ln \left(\dfrac{S_{n}^{k}}{S_{n}^{k+1}}\right)+I_{n}^{k+1}-I_{n}^{k}-I^{*}\ln \left(\dfrac{I_{n}^{k}}{I_{n}^{k+1}}\right)\\
&+ S^{*}g(I^{*})\Delta t\left(\dfrac{I_{n}^{k+1}}{I^{*}}-\dfrac{I_{n}^{k}}{I^{*}}+\ln \left(\dfrac{I_{n}^{k}}{I_{n}^{k+1}}\right)\right) + \dfrac{S^{*}f(V^{*})}{d_V}\left(\dfrac{V_{n}^{k+1}}{V^{*}}-\dfrac{V_{n}^{k}}{V^{*}}+\ln \left(\dfrac{V_{n}^{k}}{V_{n}^{k+1}}\right)\right)\\
&+ S^{*}f(V^{*})\Delta t\left(\dfrac{V_{n}^{k+1}}{V^{*}}-\dfrac{V_{n}^{k}}{V^{*}}+\ln \left(\dfrac{V_{n}^{k}}{V_{n}^{k+1}}\right)\right)\bigg]\\
\leq & \sum_{n=0}^{M}\dfrac{1}{\Delta t}\bigg[\left(1-\dfrac{S^{*}}{S_{n}^{k+1}}\right)(S_{n}^{k+1}-S_{n}^{k})+
\left(1-\dfrac{I^{*}}{I_{n}^{k+1}}\right)(I_{n}^{k+1}-I_{n}^{k})\\
& +S^{*}g(I^{*})\Delta t\left(\dfrac{I_{n}^{k+1}}{I^{*}}-\dfrac{I_{n}^{k}}{I^{*}}+\ln \left(\dfrac{I_{n}^{k}}{I_{n}^{k+1}}\right)\right) +\dfrac{S^{*}f(V^{*})}{d_V V^{*}}\left(1-\dfrac{V^{*}}{V_{n}^{k+1}}\right)(V_{n}^{k+1}+V_{n}^{k})\\
& + S^{*}f(V^{*})\Delta t \left(\dfrac{V_{n}^{k+1}}{V^{*}}-\dfrac{V_{n}^{k}}{V^{*}}+\ln \left(\dfrac{V_{n}^{k}}{V_{n}^{k+1}}\right)\right)\bigg]\\
=& \sum_{n=0}^{M}\bigg[\left(1-\dfrac{S^{*}}{S_{n}^{k+1}}\right)(\Lambda - S_{n}^{k+1}f(V_{n}^{k})- S_{n}^{k+1}g(I_{n}^{k})- d_S S_{n}^{k+1})\\
&+ \left(1-\dfrac{I^{*}}{I_{n}^{k+1}}\right)(S_{n}^{k+1}f(V_{n}^{k})+ S_{n}^{k+1}g(I_{n}^{k})- (\gamma + d_I)I_{n}^{k+1})+ \dfrac{S^{*}f(V^{*})}{d_V V^{*}}\left(1-\dfrac{V^{*}}{V_{n}^{k+1}}\right)(\alpha I_{n}^{k+1}- d_V V_{n}^{k+1})\\
&+S^{*}g(I^{*})\left(\dfrac{I_{n}^{k+1}}{I^{*}}-\dfrac{I_{n}^{k}}{I^{*}}+\ln \left(\dfrac{I_{n}^{k}}{I_{n}^{k+1}}\right)\right)+ S^{*}f(V^{*})\left(\dfrac{V_{n}^{k+1}}{V^{*}}-\dfrac{V_{n}^{k}}{V^{*}}+\ln \left(\dfrac{V_{n}^{k}}{V_{n}^{k+1}}\right)\right)\bigg]+\mathcal Z^{k},
\end{align*}
where
\begin{align*}
\mathcal Z^{k}=& \sum_{n=0}^{M}\dfrac{1}{(\Delta x)^{2}}\bigg[D_1 \left(1-\dfrac{S^{*}}{S_{n}^{k+1}}\right)\left(S_{n+1}^{k+1}-2S_{n}^{k+1}+S_{n-1}^{k+1}\right)+ D_2 \left(1-\dfrac{I^{*}}{I_{n}^{k+1}}\right)\left(I_{n+1}^{k+1}-2I_{n}^{k+1}+I_{n-1}^{k+1}\right)\\
&+ \dfrac{S^{*}f(V^{*})D_3}{d_V V^{*}}\left(1-\dfrac{V^{*}}{V_{n}^{k+1}}\right)\left(V_{n+1}^{k+1}- 2V_{n}^{k+1}+ V_{n-1}^{k+1}\right)\bigg]\\
=& \sum_{n=0}^{M}\dfrac{1}{(\Delta x)^{2}}\bigg[D_1 \left(S_{n+1}^{k+1}-2S_{n}^{k+1}+S_{n-1}^{k+1}\right) - S^{*}D_1 \left(\dfrac{S_{n+1}^{k+1}}{S_{n}^{k+1}} - 2+\dfrac{S_{n-1}^{k+1}}{S_{n}^{k+1}}\right)\\
& + D_2 \left(I_{n+1}^{k+1} - 2I_{n}^{k+1}+I_{n-1}^{k+1}\right) - I^{*}D_2 \left(\dfrac{I_{n+1}^{k+1}}{I_{n}^{k+1}}-2+\dfrac{I_{n-1}^{k+1}}{I_{n}^{k+1}}\right)\\
&+ \dfrac{S^{*}f(V^{*})D_3}{d_V V^{*}}\left(V_{n+1}^{k+1}-2V_{n}^{k+1}+V_{n-1}^{k+1}\right) -\dfrac{S^{*}f(V^{*})D_3}{d_V}\left(\dfrac{V_{n+1}^{k+1}}{V_{n}^{k+1}}-2+\dfrac{V_{n-1}^{k+1}}{V_{n}^{k+1}}\right)\bigg].
\end{align*}
Using the similar arithmetic-geometric inequality (\ref{a4}) for $S, \ I$ and $V$, we get 
\begin{align*}
\mathcal Z^{k}\leq & \dfrac{1}{(\Delta x)^{2}}\bigg[D_1 \left(S_{M+1}^{k+1}-S_{M}^{k+1}+S_{-1}^{k+1}-S_{0}^{k+1}\right) - S^{*}D_1\left(\dfrac{S_{-1}^{k+1}}{S_{0}^{k+1}}-2+\dfrac{S_{M+1}^{k+1}}{S_{M}^{k+1}}\right)\\
&+D_2 \left(I_{M+1}^{k+1}-I_{M}^{k+1}+I_{-1}^{k+1}-I_{0}^{k+1}\right) - I^{*}D_2\left(\dfrac{I_{-1}^{k+1}}{I_{0}^{k+1}}
-2+\dfrac{I_{M+1}^{k+1}}{I_{M}^{k+1}}\right)\\
&+ \dfrac{S^{*}f(V^{*})D_3}{d_V V^{*}}\left(V_{M+1}^{k+1}-V_{M}^{k+1}+V_{-1}^{k+1}-V_{0}^{k+1}\right)-\dfrac{S^{*}f(V^{*})D_3}{d_V}\left(\dfrac{V_{-1}^{k+1}}{V_{0}^{k+1}}-2+\dfrac{V_{M+1}^{k+1}}{V_{M}^{k+1}}\right)\bigg]\\
=& \ 0.
\end{align*}
By assumption (A2), we have the following inequality
\begin{align*}
\Phi \left(\dfrac{f(V_{n}^{k})}{f(V^{*})}\right)-\Phi \left(\dfrac{V_{n}^{k}}{V^{*}}\right)=& \dfrac{f(V_{n}^{k})}{f(V^{*})}-\dfrac{V_{n}^{k}}{V^{*}}+\ln \left(\dfrac{V_{n}^{k}f(V^{*})}{V^{*}f(V_{n}^{k})}\right)\\
\leq & \dfrac{f(V_{n}^{k})}{f(V^{*})}-\dfrac{V_{n}^{k}}{V^{*}}+ \dfrac{V_{n}^{k}f(V^{*})}{V^{*}f(V_{n}^{k})}-1)\\
=& \left(\dfrac{f(V_{n}^{k})}{f(V^{*})}-\dfrac{V_{n}^{k}}{V^{*}}\right)\left(1- \dfrac{f(V^{*})}{f(V_{n}^{k})}\right)\\
\leq & \ 0.
\end{align*}
Similarly, we get
\begin{align*}
\Phi \left(\dfrac{g(I_{n}^{k})}{g(I^{*})}\right)-\Phi \left(\dfrac{I_{n}^{k}}{I^{*}}\right)\leq \left(\dfrac{g(I_{n}^{k})}{g(I^{*})}-\dfrac{I_{n}^{k}}{I^{*}}\right)\left(1- \dfrac{g(I^{*})}{g(I_{n}^{k})}\right)\leq \ 0.
\end{align*}
Hence, using above conditions, we conclude that 
\begin{align*}
\mathcal H^{k+1}& - \mathcal H^{k}\\
=&  \sum_{n=0}^{M}\bigg[-\dfrac{d_S}{S_{n}^{k+1}}(S_{n}^{k+1}-S^{*})^{2} - S^{*}g(I^{*})\bigg(\Phi\left(\dfrac{S^{*}}{S_{n}^{k+1}}\right)
+\Phi\left(\dfrac{S_{n}^{k+1}g(I_{n}^{k})I^{*}}{S^{*}g(I^{*})I_{n}^{k+1}}\right)+ \Phi\left(\dfrac{I_{n}^{k}}{I^{*}}\right)-\Phi \left(\dfrac{g(I_{n}^{k})}{g(I^{*})}\right)\bigg)\\
& -S^{*}f(V^{*})\bigg(\Phi\left(\dfrac{S^{*}}{S_{n}^{k+1}}\right)+\Phi\left(\dfrac{S_{n}^{k+1}f(V_{n}^{k})I^{*}}{S^{*}f(V^{*})I_{n}^{k+1}}\right)+ \Phi\left(\dfrac{V^{*}I_{n}^{k+1}}{V_{n}^{k+1}I^{*}}\right)+ \Phi\left(\dfrac{V_{n}^{k}}{V^{*}}\right)-\Phi \left(\dfrac{f(V_{n}^{k})}{f(V^{*})}\right)\bigg)\bigg]+ \mathcal Z^{k}\\
\leq &\sum_{n=0}^{M}\bigg[-\dfrac{d_S}{S_{n}^{k+1}}(S_{n}^{k+1}-S^{*})^{2} - S^{*}g(I^{*})\bigg(\Phi\left(\dfrac{S^{*}}{S_{n}^{k+1}}\right)
+\Phi\left(\dfrac{S_{n}^{k+1}g(I_{n}^{k})I^{*}}{S^{*}g(I^{*})I_{n}^{k+1}}\right)\bigg)\\
& -S^{*}f(V^{*})\bigg(\Phi\left(\dfrac{S^{*}}{S_{n}^{k+1}}\right) +\Phi\left(\dfrac{S_{n}^{k+1}f(V_{n}^{k})I^{*}}{S^{*}f(V^{*})I_{n}^{k+1}}\right)
+ \Phi\left(\dfrac{V^{*}I_{n}^{k+1}}{V_{n}^{k+1}I^{*}}\right)\bigg)\bigg]\\
\leq & \ 0.
\end{align*}
This implies that $\mathcal H^{k}$ is a monotone decreasing sequence, then there exists a constant $\tilde{\mathcal H}$ such that $\lim_{k\rightarrow \infty}\mathcal H^{k}=\tilde{\mathcal H}$, and we have $\lim_{k\rightarrow \infty}(\mathcal H_{k+1}- \mathcal H^{k})=0.$
Which conclude that
\begin{align*}
\lim_{k\rightarrow \infty}S_{n}^{k}=S^{*},\ \lim_{k\rightarrow \infty}\dfrac{g(I_{n}^{k})I^{*}}{g(I^{*})I_{n}^{k+1}}=
\lim_{k\rightarrow \infty}\dfrac{f(V_{n}^{k})I^{*}}{f(V^{*})I_{n}^{k+1}}=\lim_{k\rightarrow \infty}\dfrac{V^{*}I_{n}^{k+1}}{V_{n}^{k+1}I^{*}}=1.
\end{align*}
By the first equation of system (\ref{a1}), we obtain
\begin{align*}
\dfrac{S_{n}^{k+1} - S_{n}^{k}}{\Delta t}=& D_1 \dfrac{S_{n+1}^{k+1} - 2S_{n}^{k+1} + S_{n-1}^{k+1}}{(\Delta x)^{2}}+\Lambda -S_{n}^{k+1}f(V_{n}^{k}) - S_{n}^{k+1}g(I_{n}^{k}) - d_S S_{n}^{k+1}\\
=& D_1 \dfrac{S_{n+1}^{k+1}-2S_{n}^{k+1}+S_{n-1}^{k+1}}{(\Delta x)^{2}} + \Lambda - d_S S_{n}^{k+1} - I_{n}^{k+1}\left(\dfrac{S_{n}^{k+1}f(V_{n}^{k})}{I_{n}^{k+1}}+\dfrac{S_{n}^{k+1}g(I_{n}^{k})}{I_{n}^{k+1}}\right).
\end{align*}
Taking $k\rightarrow +\infty$ in the above equality, we have
\begin{align*}
0=0+ \Lambda - d_S S^{*}-\lim_{k\rightarrow +\infty}I_{n}^{k+1}\left(\dfrac{S^{*}f(V^{*})}{I^{*}}+\dfrac{S^{*}g(I^{*})}{I^{*}}\right).
\end{align*}
Using $\Lambda - d_S S^{*}= S^{*}f(V^{*})+S^{*}g(I^{*})$, we have
\begin{align*}
\lim_{k\rightarrow +\infty}I_{n}^{k+1}=I^{*}.
\end{align*}
Similarly, we get
\begin{align*}
\lim_{k\rightarrow +\infty}V_{n}^{k}=V^{*}.
\end{align*}
This completes the proof.
\end{proof}
Thus, using the Theorems \ref{theorem3.2} and \ref{theorem3.3}, we conclude that the discretized system (\ref{a1}) exhibits dynamic consistency with the continuous system (\ref{e1}) as well as the global asymptotic stability of both the uninfected and the infected steady states.

\section{Numerical Results}\label{sec4}
In this section, we present numerical example that illustrate and confirm the findings of this study for the linear incidence function such as $f(V)=\beta_1 V$ and $g(I)=\beta_2 I$. Thus the system (\ref{e1}) becomes
\begin{equation}\label{b1}
\begin{cases}
\begin{split}
    &\dfrac{\partial S(x,t)}{\partial t}=\ D_1\Delta S  + \Lambda  - \beta_1 S V - \beta_2 S I - d_S S, \quad x \in \Omega, \ t>0,\\
&\dfrac{\partial I(x,t)}{\partial t}=\ D_2\Delta I + \beta_1 S V +  \beta_2 S I - (\gamma+d_I) I, \quad x \in \Omega, \ t>0,\\
 & \dfrac{\partial V(x,t)}{\partial t}= \  D_3\Delta V + \alpha I -  d_V  V, \quad x \in \Omega, \ t>0,\\
&S(x,0)=\varphi_1(x)\geq 0,\quad I(x,0) = \varphi_2(x),\quad V(x,0)=\varphi_3(x), \quad x \in \Omega,\\
&\dfrac{\partial S}{\partial\nu}=\dfrac{\partial I}{\partial\nu}=\dfrac{\partial V}{\partial\nu}=0,\quad x \in \partial\Omega, \ t>0,
\end{split}
\end{cases}
\end{equation}
The basic reproduction number of the system  (\ref{b1}) is
\begin{align*}
\mathcal{R}_{0}=\dfrac{\Lambda \alpha \beta_1}{d_S d_V(\gamma+d_I )}+\dfrac{\Lambda \beta_2}{d_S(\gamma+d_I )}.
\end{align*}
When $\mathcal{R}_{0}<1$, the system (\ref{b1}) possesses an uninfected steady state $E_0\left(\dfrac{\Lambda}{d_S}, 0, 0\right)$ and also has an infected steady state $E^*\Bigg(\dfrac{\Lambda}{d_S \mathcal{R}_{0}},\dfrac{\Lambda\Big(1-\dfrac{1}{\mathcal{R}_{0}}\Big)}{\gamma +d_I},\dfrac{\alpha \Lambda\Big(1-\dfrac{1}{\mathcal{R}_{0}}\Big)}{d_V(\mu + d_I)}\Bigg)$ for $\mathcal{R}_{0}>1$. At first sensitivity analysis is used to determine the response of the model to variations in its parameter values. In the present case, focus is given to determining how changes in the model parameters impact the basic reproduction number. This is done through the Latin hypercube sampling (LHS) and the partial correlation coefficients (PRCC) to determine the relative importance of the parameters in $\mathcal{R}_{0}$ for the disease transmission \cite{r36}. In such a scenario, it is more appropriate to treat each parameter as a random variable, distributed according to an appropriate probability distribution. We assume that our model parameters are normally distributed although it is quite possible that some parameters are constant towards a particular value such as recruitment rate ($\Lambda$) and death rate ($d_T$) of susceptible cells. PRCC reduces the non-linearity effects by rearranging the data in ascending order, replacing the values with their ranks and then providing the measure of monotonicity after the removal of the linear effects of each model parameter keeping all other parameters constant \cite{r37}. The corresponding Tornado plots based on a random sample of 1000 points for the six parameters in $\mathcal{R}_{0}$ are shown in Figure \ref{figsen}. The horizontal lines represent the significant range of correlation, i.e., $|$PRCC$|> 0.5$. The sensitivity analysis suggests that the most significant parameters are $\beta_1$ and $\beta_2$, an increase in these values will have an increase in the spread of the disease. Hence, these parameters should be estimated with precision to accurately capture the dynamics of the infection.
\begin{figure}[h!]
\centering
\includegraphics[width=5in]{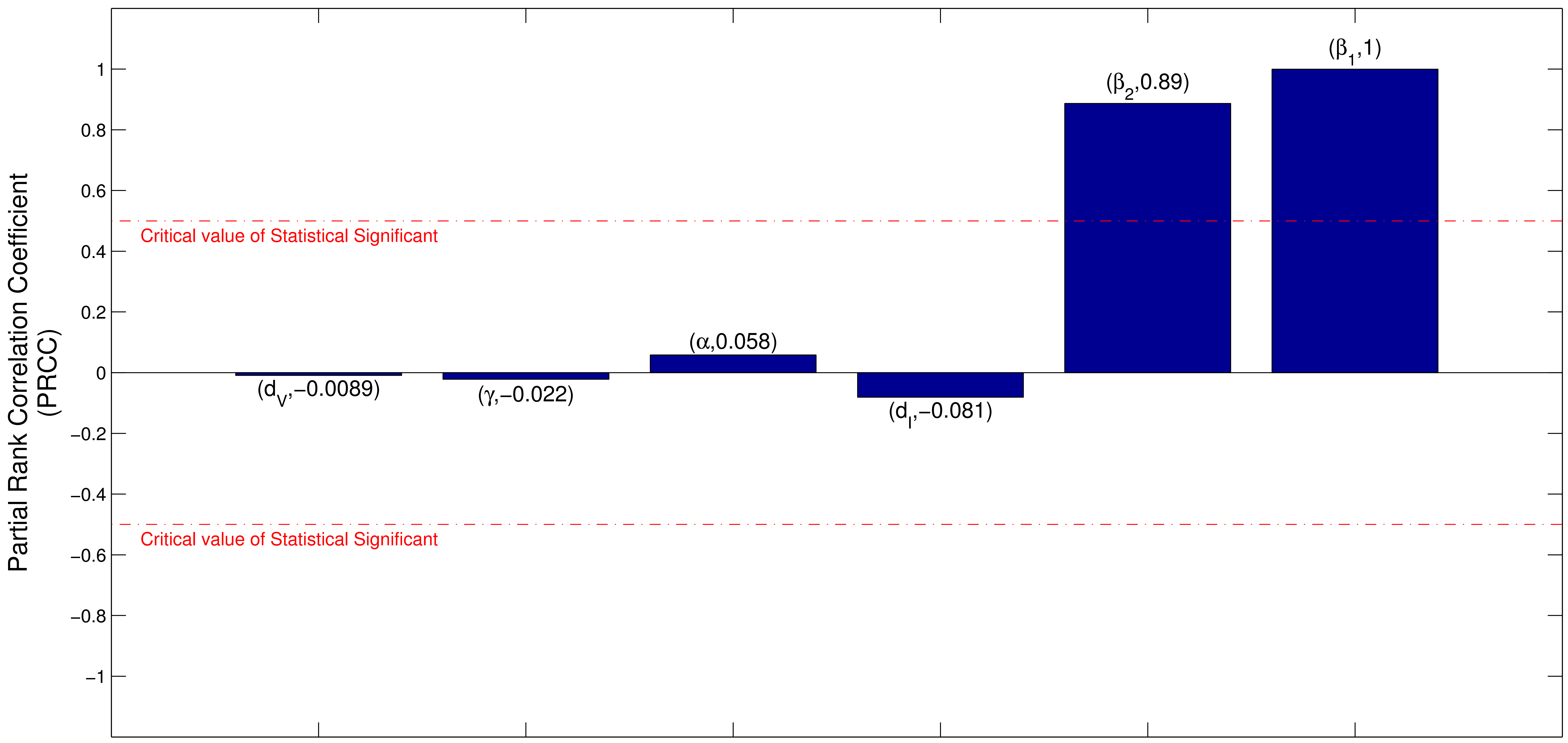}
\caption{Partial rank correlation coefficient (PRCC) results for significance of parameters involved in $\mathcal{R}_{0}$.}
 \label{figsen}
\end{figure}

Now, we numerically illustrate the results for global stability for both the steady states. Accordingly, we use two sets of system parameters, one corresponding to $\mathcal{R}_{0}<1$ (when $E_0$ is globally asymptotically stable for both the continuous and discretized models) and the other for $\mathcal{R}_{0}>1$ (when $E^*$ is globally asymptotically stable for both the continuous and discretized models). The numerical simulation is carried out using the NSFD scheme described by the system (\ref{a1}) with initial condition taken as
$$S(x,0)=10^7,\ \ I(x,0)=100 e^x,\ \ V(x,0)=100 e^x.$$
For the purpose of illustration of both the scenarios, we choose the equal diffusion coefficients as $D_1=D_2=1\ \text{mm}^2 \text{d}^{-1}$  and $D_3=1 \ \text{mm}^2 \text{d}^{-1}$ \cite{r38}. The one-dimensional spatial domain is taken as $\Omega=[0,50]$ and the simulation carried out for a time window of $100$ days. The grid sizes used in the spatial and temporal directions are $\Delta x=0.5$ and $\Delta t=1$, respectively. The parameter set $\Lambda=10^7 \ \text{cells}\ \text{d}^{-1},\  \beta_1=\beta_2=5\times 10^{-12}\ \text{virion}^{-1}\ \text{d}^{-1},\   d_S=0.1\ \text{d}^{-1},\  \gamma=0.01 \ \text{d}^{-1},\  d_I=0.04\ \text{d}^{-1},\  \alpha=100\ \text{d}^{-1},\  d_V=5\ \text{d}^{-1}$ \cite{r26}, results in $\mathcal{R}_{0}=0.21<1.$  Thus, in this case, the uninfected steady state $E^0$ is globally asymptotically stable. It can be observed from Figure \ref{fig1} that this is indeed the case and the system eventually approaches the uninfected steady state $E_0=(10^8, \ 0, \ 0)$. 
\begin{figure}[h!]
\centering
\includegraphics[width=\linewidth]{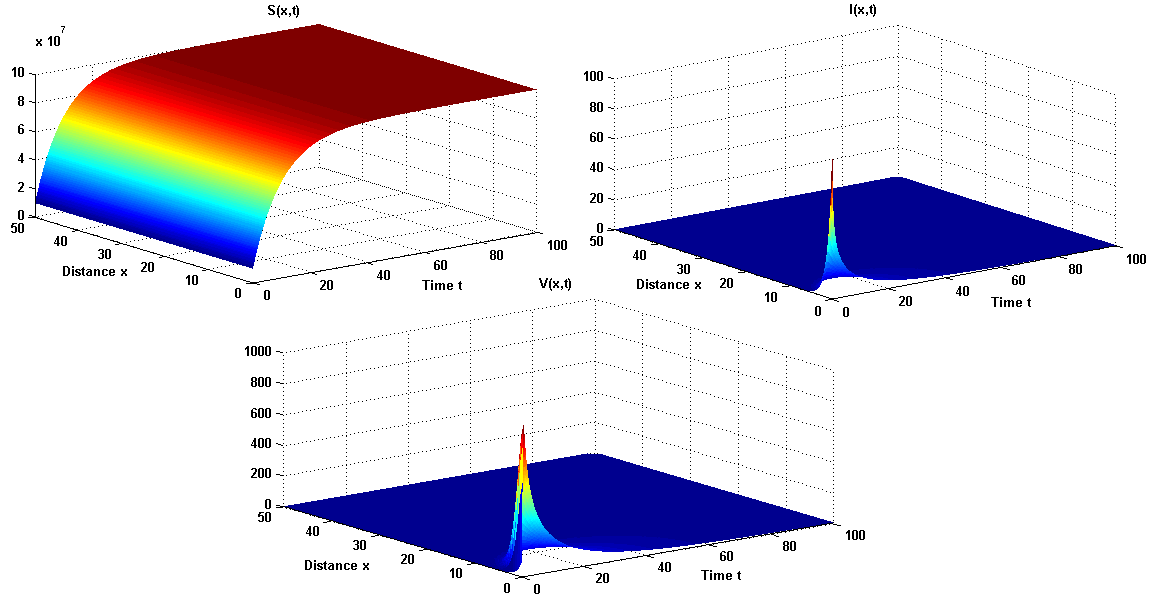}
\caption{When $\mathcal{R}_{0}=0.21<1$, the disease-free equilibrium $E_0$ of system (\ref{b1}) is globally asymptotically stable.}
 \label{fig1}
\end{figure}

For the other scenario, all the parameter values are identical with the exception of $\beta_1=\beta_2=3\times 10^{-10}\ \text{virion}^{-1}\ \text{d}^{-1}$ \cite{r26} which renders $\mathcal{R}_{0}=12.59>1$. In this case, the infected steady state is stable as can be observed numerically in Figure \ref{fig2}, where the state variables approach the infected steady state $E^*=(8 \times10^6, \ 1.8 \times10^8, \ 3.7 \times10^9)$. For both the sets of simulations it can be easily seen that the steady states do not depend on the initial spatial points. 
\begin{figure}[h!]
\centering
\includegraphics[width=\linewidth]{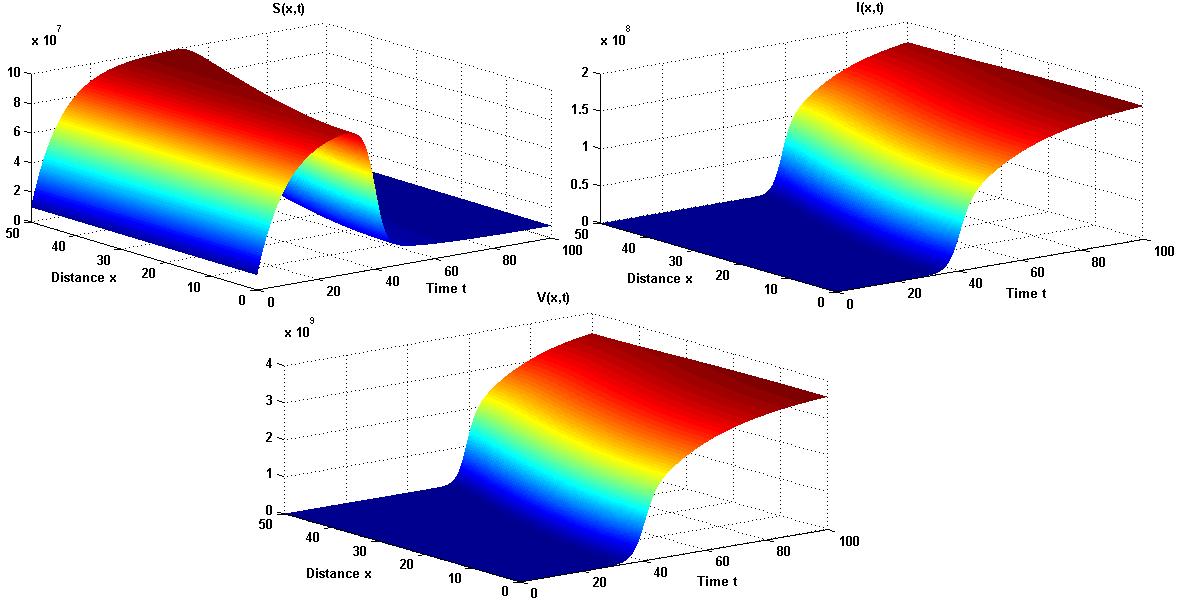}
\caption{When $\mathcal{R}_{0}= 12.59>1$,the disease-free equilibrium $E^*$ of system (\ref{b1}) is globally asymptotically stable.}
 \label{fig2}
\end{figure}

We have also shown that the global asymptotically stable results are dependent only on the parameters of the non-diffusive system because of $\mathcal{R}_{0}$  and independent of the choices of the diffusion coefficients. This is also illustrated by way of numerical simulations. For illustrative purpose we only show the case for $\mathcal{R}_{0}>1$  using the corresponding parameter values used above. We extend our diffusion coefficient  to $D_1=D_2=D_3=100\ \text{mm}^2 \text{d}^{-1}$ and see from Figure \ref{fig3}, that the steady states of the model dynamics are very similar to each other in the long run. Similar results can be observed for different combinations of $(D_1, \ D_2, \ D_3)$. Another crucial advantage of using the NSFD scheme over standard finite difference (SFD) scheme is that the positivity of solutions for long time simulation which already been demonstrated in many works \cite{r26, r39}.

\begin{figure}[h!]
\centering
\includegraphics[width=\textwidth]{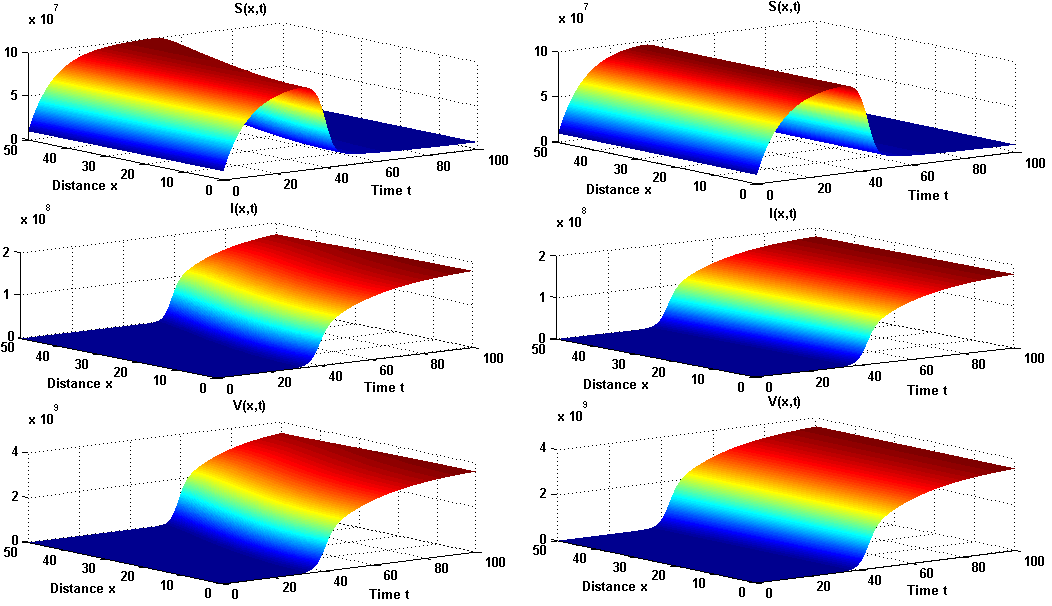}
\caption{Dynamics of system (\ref{b1}) under diffusion coefficients $D_1=D_2=D_3=1\ \text{mm}^2 \text{d}^{-1}$ (left) and $D_1=D_2=D_3=100\ \text{mm}^2 \text{d}^{-1}$ (right).}
 \label{fig3}
\end{figure}

\section{Conclusion}
In order to investigate the mechanism of viral infection and replication, we analyzed a diffusive intra host virus dynamics model that includes cell to cell transmission and allows for general nonlinear incidence functions. We established the well posedness of the model and studied the linear stability of its equilibria. The basic reproduction number $\mathcal{R}_{0}$ serves as a threshold that determines whether the infection dies out or persists. Specifically, we showed that the global dynamics is completely characterized by $\mathcal{R}_{0}$: if $\mathcal{R}_{0} \le 1$, then the disease free equilibrium $E_{0}$ is globally asymptotically stable, so the virus is cleared and the infection vanishes; if $\mathcal{R}_{0} > 1$, then the endemic equilibrium $E^{*}$ is globally asymptotically stable. From the expression of $\mathcal{R}_{0}$ it follows that ignoring either virus to cell transmission or cell to cell transmission may lead to an underestimation of the actual reproduction number. Hence, reducing only the contribution from virus to cell transmission may not be sufficient to eliminate the infection when cell to cell transmission is present. Our analysis also shows that, under homogeneous Neumann boundary conditions, the diffusion coefficients do not affect the global behavior of the system. Furthermore, by applying the nonstandard finite difference (NSFD) scheme, we derived a discrete version of the continuous model. We proved that this discretization preserves the key qualitative properties of the original system, including positivity, ultimate boundedness, and global stability of the equilibria, without imposing restrictions on the spatial or temporal step sizes.

The model considered here extends several earlier works \cite{r40, r41, r42, r43, r44, r45} and the results obtained here improve some known results. A natural continuation of this work is to incorporate a logistic growth term for uninfected target cells and to consider more general infection functions. This will be addressed in future studies.

\subsection*{Acknowledgments}
 The authors would like to thank the anonymous referee for his/her comments that helped us improve this article.
 
\subsection*{Data availability statement}
 The data that support the findings of this study are available within the article.
 
 \subsection*{Declaration of generative AI }
During the preparation of this work the author used ChatGPT and QuillBot were used to polish writing.

%
%
%
\end{document}